\numberwithin{equation}{section}
\newtheorem{theorem}{Theorem}
\newtheorem{corollary}[theorem]{Corollary}
\newtheorem{lemma}[theorem]{Lemma}
\newtheorem{proposition}[theorem]{Proposition}
\theoremstyle{definition}
\newtheorem{remark}[equation]{Remark}
\newcommand{\CC}{\mathbb{C}}
\newcommand{\GG}{\mathbb{G}}
\newcommand{\II}{\mathbb{I}}
\newcommand{\LL}{\mathbb{L}}
\newcommand{\NN}{\mathbb{N}}
\newcommand{\PP}{\mathbb{P}}
\newcommand{\QQ}{\mathbb{Q}}
\newcommand{\RR}{\mathbb{R}}
\newcommand{\TT}{\mathbb{T}}
\newcommand{\ZZ}{\mathbb{Z}}
\newcommand{\N}{\mathbb{N}}
\newcommand{\calA}{\mathcal{A}}
\newcommand{\calP}{\mathcal{P}}
\newcommand{\calF}{\mathcal{F}}
\newcommand{\calB}{\mathcal{B}}
\newcommand{\calM}{\mathcal{M}}
\newcommand{\calO}{\mathcal{O}}
\newcommand{\calH}{\mathcal{H}}
\newcommand{\calK}{\mathcal{K}}
\newcommand{\calS}{\mathcal{S}}
\newcommand{\calR}{\mathcal{R}}
\newcommand{\calQ}{\mathcal{Q}}
\newcommand{\frakm}{\mathfrak{m}}
\newcommand{\frkS}{\mathfrak{S}}
\newcommand{\frkM}{\mathfrak{m}}
\newcommand{\frkN}{\mathfrak{n}}
\newcommand{\frkY}{\mathfrak{y}}
\newcommand{\dif}{\mathrm{d}}
\newcommand{\ex}{\bm{e}}
\newcommand{\ind}[1]{\mathds{1}_{{#1}}}
\newcommand*{\DMO}[1]{\expandafter\DeclareMathOperator\csname #1\endcsname {#1}}
\DeclarePairedDelimiter\abs{\lvert}{\rvert}
\DeclarePairedDelimiter\norm{\lVert}{\rVert}
\DeclarePairedDelimiterX\spr[2]{\langle}{\rangle}{#1,#2}
\newcommand{\ipr}[2]{#1\cdot#2}
\DeclarePairedDelimiterX\Set[2]{\{}{\}}{#1\colon #2}
\DeclarePairedDelimiterX\Seq[1]{(}{)}{#1}
\begin{document}
\title[Estimates for ergodic averages along multi-dimensional subsets of primes]{Oscillation and jump inequalities for the polynomial ergodic averages along multi-dimensional subsets of primes}

\author{Nathan Mehlhop \and Wojciech S{\l}omian}

\address[Nathan Mehlhop]{Department of Mathematics,
Rutgers University,
Piscataway, NJ 08854-8019, USA }
\email{nam225@math.rutgers.edu}

\address[Wojciech S{\l}omian]{BCAM - Basque Center for Applied Mathematics, 48009 Bilbao, Spain \&
	  Faculty of Pure and Applied Mathematics, 
	  Wroc{\l}aw University of Science and Technology\\
	  Wyb{.} Wyspia\'nskiego 27,
	  50-370 Wroc\l{}aw, Poland}
    \email{wojciech.slomian@pwr.edu.pl\ \& wslomian@bcamath.org}
\subjclass[2020]{37A30 (Primary), 37A46, 42B20}
\keywords{oscillation seminorm, jump inequality, ergodic average along primes}
\thanks{The authors were partially supported by the National Science Foundation (NSF) grant DMS-2154712. The second author was supported by the Basque Government
through the BERC  2022--2025 program and by the Ministry of
Science, Innovation and Universities: BCAM Severo Ochoa
accreditation SEV-2017-0718.}

\begin{abstract}
We prove the uniform oscillation and jump inequalities for the polynomial ergodic averages modeled over multi-dimensional subsets of primes. This is a contribution to the Rosenblatt-Wierdl conjecture \cite[Problem 4.12, p. 80]{RW} with averages taken over primes. These inequalities provide endpoints for the $r$-variational estimates obtained by Trojan \cite{Troj}.
\end{abstract}
\maketitle
\section{Introduction}
The aim of this paper is to prove uniform oscillation inequalities and $\lambda$-jump inequalities in the context of polynomial ergodic averages and truncated singular operators of the Cotlar type modeled on multi-dimensional subset of primes. We extend the known results of Trojan \cite{Troj} for the $r$-variation seminorm $V^r$ with $r>2$ to endpoint cases expressed in terms of the uniform jump and oscillation inequalities. This provides a fuller quantitative description of the pointwise convergence of the mentioned averages.
\subsection{Statement of results}
 Let $(X,\calB,\mu)$ be a  $\sigma$-finite measure space endowed with a family of invertible commuting and measure preserving transformations $S_1,\ldots, S_d:X\to X$. Let $\Omega$ be a bounded convex open subset of $\RR^k$ such that $B(0,c_{\Omega}) \subseteq \Omega \subseteq B(0,1)$ for some $c_{\Omega}\in(0, 1)$, where $B(0, u)$ is the open Euclidean ball in $\RR^k$ with radius $u>0$ centered at $0\in\RR^k$. For any $t>0$, we set
\[
\Omega_t := \{x\in\RR^{k}: t^{-1}x\in\Omega\}.
\] 
We consider a polynomial mapping
\begin{equation}\label{polymap}
    \mathcal{P}=(\mathcal{P}_1,\dots,\mathcal{P}_{d})\colon\ZZ^k\to\ZZ^{d}
\end{equation}
where each $\calP_j\colon\ZZ^k\to\ZZ$ is a polynomial of $k$ variables with integer coefficients such that $\calP_j(0)=0$. Let $k',k''\in\{0,1,\ldots,k\}$ with $k=k'+k''$. For $f\in L^\infty(X,\mu)$, we define the associated ergodic averages by  
\begin{equation}\label{def:average}
\calA_{t}^{\calP,k',k''}f(x):=\frac{1}{\vartheta_\Omega(t)}\sum_{(n,p)\in\ZZ^{k'}\times(\pm\PP)^{k''}}f(S_1^{\calP_1(n,p)}\cdots S_d^{\calP_d(n,p)}x)\mathds{1}_{\Omega_t}(n,p)\Big(\prod_{i=1}^{k''}\log|p_i|\Big),\quad x\in X,
\end{equation}
where $\pm\PP$ denotes the set of positive and negative prime numbers and
\begin{equation*}
\vartheta_\Omega(t):=\sum_{(n,p)\in\ZZ^{k'}\times(\pm \PP)^{k''}}\mathds{1}_{\Omega_t}(n,p)\Big(\prod_{i=1}^{k''}\log|p_i|\Big)
\end{equation*}
is the Chebyshev function. We also consider the Cotlar type ergodic averages given by
\begin{equation}\label{def:singular}
\calH_{t}^{\calP,k',k''}f(x):=\sum_{(n,p)\in\ZZ^{k'}\times(\pm\PP)^{k''}}f(S_1^{\calP_1(n,p)}\cdots S_d^{\calP_d(n,p)}x)K(n,p)\mathds{1}_{\Omega_t}(n,p)\Big(\prod_{i=1}^{k''}\log|p_i|\Big),\quad x\in X,
\end{equation}
where $K\colon\RR^{k}\setminus\{0\} \to \CC$ is a Calder\'on--Zygmund kernel satisfying the following conditions:
\begin{enumerate}
\item The size condition: For every $x\in\RR^k\setminus\{0\}$, we have
\begin{equation}
\label{eq:size-unif}
\abs{K(x)} \lesssim \abs{x}^{-k}.
\end{equation}
\item The cancellation condition: For every $0<r<R<\infty$, we have
\begin{equation}
\label{eq:cancel}
\int_{\Omega_{R}\setminus \Omega_{r}} K(y) \dif y = 0.
\end{equation}
\item
  The Lipschitz continuity condition:
  For every $x, y\in\RR^k\setminus\{0\}$ with $2|y|\leq |x|$, we have
\begin{equation}
\label{eq:K-modulus-cont}
\abs{K(x)-K(x+y)}
\lesssim 
\abs{y} \abs{x}^{-(k+1)}.
\end{equation}
\end{enumerate}

\indent We recall the definitions of the oscillation seminorm and $\lambda$-jump counting function. Let $\II\subseteq\RR$. For an increasing sequence $I=(I_j: j\in\NN)\subseteq\II$ and $N\in\NN\cup\{\infty\}$, the {\it truncated oscillation seminorm} of a function $f\colon \II \to \CC$ is defined by
\begin{align}
\label{def:osc}
O_{I, N}^2( f(t): t\in \II)
:= \Big(\sum_{j=1}^N\sup_{\substack{I_j \le t < I_{j+1}\\t\in\II}}
\abs{f(t)-f(I_j)}^2\Big)^{1/2}.
\end{align}
For any $\lambda>0$ and $\II \subseteq \RR$, the \emph{$\lambda$-jump counting function} of a function $f\colon\II \to \CC$ is defined by
\begin{align}
  \label{def:jump}
N_{\lambda}(f(t) : t\in\II):=\sup \{J\in\NN\, |\, \exists_{\substack{t_{0}<\dotsb<t_{J}\\ t_{j}\in\II}}  : \min_{0<j\leq J} \abs{f(t_{j})-f(t_{j-1})} \geq \lambda\}.
\end{align}
We can now state the main result of this paper.
\begin{theorem}\label{MAINthm}
Let $d, k\ge 1$ and let $\calP$ be a polynomial mapping as in ~\eqref{polymap}. Let $k',k''\in\{0,1,\ldots,k\}$ with $k'+k''=k$ and let $\calM_t^{\calP,k',k''}$ be either $\calA_t^{\calP,k',k''}$ or $\calH_t^{\calP,k',k''}$. Then, for any $p\in(1,\infty)$, there is a constant $C_{p,d,k,\deg\calP}>0$ such that
\begin{align}
    \sup_{\lambda>0}\norm[\big]{\lambda N_{\lambda}(\calM_t^{\calP,k',k''} f:t>0)^{1/2}}_{L^p(X,\mu)}&\le C_{p,d,k,\deg \calP}\|f\|_{L^p(X,\mu)},\label{jumpest}\\
    \sup_{N\in\NN}\sup_{I\in\mathfrak{S}_N(\RR_+)}\norm[\big]{O_{I,N}^2(\calM_t^{\calP,k',k''} f:t>0)}_{L^p(X,\mu)}&\le C_{p,d,k,\deg\calP}\norm{f}_{L^p(X,\mu)},\label{oscillest}
\end{align}
for any $f\in L^p(X,\mu)$. Here, $\mathfrak{S}_N(\RR_+)$ is the set of all strictly increasing sequences in $\RR_+$ of length $N+1$ (see Section~\ref{notation:semi}). 
The constant $C_{p,d,k,\deg\calP}$ is independent of the
coefficients of the polynomial mapping $\calP$.
\end{theorem}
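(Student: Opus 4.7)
My plan is to follow the now-standard Fourier-analytic approach to pointwise ergodic theorems for polynomial averages, adapted to accommodate the prime weights and the multi-dimensional structure. By the Calder\'on transference principle, it suffices to prove the inequalities \eqref{jumpest} and \eqref{oscillest} on $\ell^p(\ZZ^d)$ for the model operators in which $S_1,\dots,S_d$ are unit shifts; the averages then become convolutions with kernels supported on the image $\calP(\Omega_t\cap(\ZZ^{k'}\times(\pm\PP)^{k''}))$. I would work on the Fourier side and study the associated multipliers $m_t^{\calP,k',k''}(\xi)$ on $\TT^d$.

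The heart of the argument is a Hardy--Littlewood circle-method decomposition of $m_t^{\calP,k',k''}$ into a major-arc approximant plus a minor-arc error. On the major arcs, the multiplier is comparable to a sum
\[
\sum_{q\le Q_t}\sum_{\substack{a\in A_q}}G(a/q)\,\Phi_t(\xi-a/q)\,\eta_t(\xi-a/q),
\]
where $G(a/q)$ is a Gauss-type sum that, in the $k''$ prime coordinates, is a product of Ramanujan sums $\mu(q)/\varphi(q)$ arising from partial summation against the logarithmic weights, $\Phi_t$ is the continuous analogue, and $\eta_t$ is a smooth cut-off. The minor-arc contribution should admit logarithmic savings derived from a Weyl/Vinogradov estimate for $\sum_{n,p}e(\xi\cdot\calP(n,p))\prod\log|p_i|$ via Vaughan's identity, which allows summation over dyadic scales $t=2^j$. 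For the Cotlar-type operator $\calH_t^{\calP,k',k''}$, the same decomposition applies after a smooth truncation, using the cancellation \eqref{eq:cancel} and the Lipschitz bound \eqref{eq:K-modulus-cont} to absorb the kernel.

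With the decomposition in hand, the jump inequality \eqref{jumpest} is first established for the long jumps (those caused by endpoints on a lacunary scale $\{2^j\}$). For the major-arc piece this reduces to combining Bourgain's $L^p$ jump inequality for the continuous model $\Phi_t$ with the Ionescu--Wainger multi-frequency multiplier theorem, which handles the sum over rational frequencies with only an $O((\log q)^{\veps})$ loss that is absorbed by the $1/\varphi(q)$ factors. Short jumps inside a dyadic block $[2^j,2^{j+1}]$ are bounded via the standard $2$-variation reduction and a square function $\big(\sum_j\sup_{t\in[2^j,2^{j+1}]}|m_t-m_{2^j}|^2\big)^{1/2}$ controlled by the smoothness of $\Phi_t$ in $t$. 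The uniform oscillation inequality \eqref{oscillest} is then obtained by splitting a general increasing sequence $I\in\mathfrak{S}_N(\RR_+)$ into its long part (which is dominated by the jump count) and its short part inside dyadic blocks (handled by the same short-variation square function).

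The main obstacle will be the high-dimensional prime minor-arc estimate, uniform in the coefficients of $\calP$: one needs a power-saving bound for the multi-dimensional Weyl sum with logarithmic weights over primes, strong enough to be summed dyadically without blowing up the constants. A related, more technical, difficulty is that the Ionescu--Wainger step must be carried out at the endpoint $r=2$ rather than for $r$-variation with $r>2$ as in Trojan's work \cite{Troj}; this requires using the sharp jump version of the Ionescu--Wainger theorem (in the spirit of the Mirek--Stein--Trojan--Zorin-Kranich program) and checking that all intermediate square-function estimates on the continuous side survive at this endpoint, uniformly in the polynomial parameters.
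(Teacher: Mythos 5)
Your outline follows the same overall strategy as the paper (Calder\'on transference, circle method with Ionescu--Wainger fractions, continuous jump/oscillation input for the major arcs, a long/short splitting of scales), but two of your concrete choices would break down. First, the splitting into dyadic blocks $[2^j,2^{j+1}]$ does not work here: the short-variation term cannot be ``controlled by the smoothness of $\Phi_t$ in $t$,'' because it is the \emph{discrete} operator $\calM_t f$ whose $2$-variation over the block must be bounded, and the discrete-to-continuous approximation only saves a power of $\log t$ per scale while $t$ ranges over a continuum inside the block. The only available $\ell^1$ bound for the variation over a full dyadic block is $O(1)$ (the relative mass $(\vartheta_\Omega(2^{j+1})-\vartheta_\Omega(2^j))/\vartheta_\Omega(2^j)\approx 1$), which is not summable in $j$ and cannot be interpolated to $\ell^p$ for $p$ near $1$. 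This is precisely why the paper (following Trojan and Mirek--Stein--Trojan) splits along the subexponential sequence $N_n=\lfloor 2^{n^\tau}\rfloor$ with $\tau<1-1/\min(2,p)$, so that the $\ell^1$ short-variation bound becomes $n^{\tau-1}$ and interpolation with $\ell^2$ closes. Second, you work with multipliers on $\TT^d$ for the given $\calP$; to obtain the claimed independence of the constant from the coefficients of $\calP$ (and to have Weyl and Gauss-sum estimates that are uniform), one must first lift to the canonical mapping $\calQ(x)=(x^\gamma:\gamma\in\Gamma)$ on $\ZZ^\Gamma$. Without this reduction the uniformity assertion in the theorem is not addressed.

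Two further points of substance. Your description of the major-arc coefficients as ``a product of Ramanujan sums $\mu(q)/\varphi(q)$'' is only correct for linear polynomials; for general $\calQ$ the relevant object is the complete exponential sum $G(a/q)=q^{-k'}\varphi(q)^{-k''}\sum_{x,y}\ex((a/q)\cdot\calQ(x,y))$ over reduced residues in the prime coordinates, and what the argument actually needs is the Hua-type power saving $|G(a/q)|\lesssim q^{-\delta}$; the gain on the major arcs comes from exhausting the Ionescu--Wainger fractions in annuli $\Sigma_s$ and pairing this decay against the (at worst logarithmic) loss in the vector-valued Ionescu--Wainger theorem, not from absorbing losses into $1/\varphi(q)$. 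Finally, transferring the continuous jump/oscillation inequality for $\Phi_t$ (resp.\ $\Psi_t$) to the periodized discrete major-arc multiplier at large scales requires the Magyar--Stein--Wainger sampling principle and its jump counterpart from Mirek--Stein--Zorin-Kranich, while the small scales are handled via the Rademacher--Menshov inequality; the Ionescu--Wainger theorem alone does not deliver the $r=2$ endpoint seminorms. These are the ingredients your sketch would need to make the ``endpoint'' step rigorous.
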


In the proof of the above theorem, we use methods developed in \cite{MST2,MSZ3,Troj} and very recently in \cite{MSS,S}. We follow Bourgain's approach \cite{B3} to use the Calderón transference principle \cite{Cald} which reduce the problem to the integer shift system (see Section~\ref{sec:red}) and then exploit the Hardy--Littlewood circle method to analyze the appropriate Fourier multipliers.
The main tools used to handle the estimates for the multiplier operators are: an appropriate generalization of Weyl's inequality (Proposition~\ref{weylinq}); the Ionescu--Wainger multiplier theorem (see \cite{IW,MSZ3} and \cite{TaoIW}) combined with the Rademacher--Menshov inequality (see \cite{MST2}) and standard multiplier approximations (Lemma~\ref{multiplierapprox}); the Magyar--Stein--Wainger sampling principle \cite{MSW} and \cite{MSZ1}.

As a consequence of Theorem~\ref{MAINthm}, we can state the following quantitative form of the ergodic theorem concerning the averages $\calA_t^{\calP,k',k''}$ and $\calH_t^{\calP,k',k''}$.
\begin{corollary}
\label{thm:cor1}
Let $(X,\calB,\mu)$ be a $\sigma$-finite measure space.
Let $d, k\ge 1$ and let $\calP$ be a polynomial mapping as in ~\eqref{polymap}. Let $k',k''\in\{0,1,\ldots,k\}$ with $k'+k''=k$ and let $\calM_t^{\calP,k',k''}$ be either $\calA_t^{\calP,k',k''}$ or $\calH_t^{\calP,k',k''}$. Let $p\in(1,\infty)$ and $f\in L^p(X,\mu)$. Then we have:
\begin{itemize}
\item[(i)] \textit{(Mean ergodic theorem)} the averages
$\calM_t^{\calP,k',k''}f$ converge in $L^p(X,\mu)$ norm as $t\to \infty$;

\item[(ii)] \textit{(Pointwise ergodic theorem)} the averages
$\calM_t^{\calP,k',k''}f$ converge pointwise $\mu$-almost everywhere on $X$ as $t\to\infty$;

\item[(iii)] \textit{(Maximal ergodic theorem)}
the following maximal estimate holds:
\begin{align}
\label{thm:max}
\big\|\sup_{t>0}|\calM_t^{\calP,k',k''}f|\big\|_{L^p(X, \mu)}\lesssim_{d,k,p, \deg \mathcal P}\|f\|_{L^p(X, \mu)};
\end{align}
\item[(iv)] \textit{(Oscillation ergodic theorem)}
the following uniform oscillation inequality holds:
\begin{align}
\label{thm:osc}
\sup_{N\in\NN}\sup_{I\in\mathfrak S_N(\RR_+) }\big\|O_{I, N}^2(\calM_t^{\calP,k',k''}f: t>0)\big\|_{L^p(X, \mu)}\lesssim_{d,k,p, \deg \mathcal P}\|f\|_{L^p(X, \mu)};
\end{align}
\item[(v)] \textit{(Variational ergodic theorem)}
for any $r\in(2,\infty)$, the following $r$-variational inequality holds (see Section~\ref{notation:semi} for the definition of $V^r$):
\begin{align}
\label{thm:var}
\big\|V^r(\calM_t^{\calP,k',k''}f: t>0)\big\|_{L^p(X, \mu)}\lesssim_{d,k,p, r,  \deg \mathcal P}\|f\|_{L^p(X, \mu)};
\end{align}
\item[(vi)] \textit{(Jump ergodic theorem)}
the following jump inequality holds:
\begin{align}
\label{thm:jump}
\sup_{\lambda>0}\big\|\lambda N_{\lambda}(\calM_t^{\calP,k',k''}f:t>0)^{1/2}\big\|_{L^p(X,\mu)}\lesssim_{d,k,p, \deg \mathcal P}\|f\|_{L^p(X, \mu)}.
\end{align}

\end{itemize}
The implicit constants in \eqref{thm:max}, \eqref{thm:osc}, \eqref{thm:var}, and \eqref{thm:jump} are independent of the
coefficients of the polynomial mapping $\calP$.
\end{corollary}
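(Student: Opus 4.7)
All six parts are derived from Theorem~\ref{MAINthm} by the standard chain of implications among the jump, variational, oscillation, and maximal seminorms. Parts (iv) and (vi) are literal restatements of \eqref{oscillest} and \eqref{jumpest}, requiring no further argument.

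For part (v), I would invoke the well-known pointwise embedding
\begin{equation*}
V^r\bigl(F(t):t>0\bigr)\lesssim_r\sup_{\lambda>0}\lambda N_\lambda\bigl(F(t):t>0\bigr)^{1/2},\qquad r\in(2,\infty),
\end{equation*}
valid for any scalar-valued function (see, e.g., \cite{MST2,MSZ3}), applied with $F(t)=\calM_t^{\calP,k',k''}f(x)$; taking $L^p(X,\mu)$ norms then reduces \eqref{thm:var} to \eqref{jumpest}. For part (iii), fix any $t_0>0$ and use the pointwise estimate $\sup_{t>0}\abs{\calM_t^{\calP,k',k''}f}\le\abs{\calM_{t_0}^{\calP,k',k''}f}+V^r(\calM_t^{\calP,k',k''}f:t>0)$, valid for every $r\in(2,\infty)$. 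Combined with part (v) and the $L^p$-boundedness of $\calM_{t_0}^{\calP,k',k''}$ for a single scale $t_0$ (immediate as a finitely supported averaging operator, or as a Calder\'on--Zygmund truncation at a fixed scale), this yields \eqref{thm:max}.

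For the qualitative statements (i) and (ii), the jump inequality \eqref{jumpest} forces $N_\lambda(\calM_t^{\calP,k',k''}f(x):t>0)<\infty$ for $\mu$-a.e.\ $x\in X$ and every $\lambda>0$ via a countable intersection over $\lambda\in\{1/n:n\in\NN\}$. Any scalar function $F$ on $(0,\infty)$ satisfying $N_\lambda(F)<\infty$ for every $\lambda>0$ must admit a limit as $t\to\infty$ --- otherwise $\limsup-\liminf$ as $t\to\infty$ would exceed some $\delta>0$, producing infinitely many $\delta/2$-jumps --- which proves (ii). Mean convergence (i) then follows by dominated convergence with the maximal function from part (iii) as majorant, Fatou's lemma placing the pointwise limit in $L^p(X,\mu)$.

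The only genuine analytical substance is already contained in Theorem~\ref{MAINthm}; the passage to the six corollary statements is entirely soft and presents no substantive obstacle.
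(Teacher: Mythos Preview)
Your overall strategy matches the paper's own discussion (which appears in the enumerated comments following the corollary rather than as a formal proof): parts (iv) and (vi) are restatements of Theorem~\ref{MAINthm}, (vi) implies (v), any of (iv)--(vi) yields (ii) and (iii), and (i) follows from (ii)+(iii) by dominated convergence. Your arguments for (ii), (iii), and (i) are correct as written.

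There is, however, a genuine gap in your derivation of (v). The pointwise inequality you invoke, even granting it, gives after taking $L^p$ norms
\[
\big\|V^r(\calM_t^{\calP,k',k''}f:t>0)\big\|_{L^p(X,\mu)}\lesssim_r\Big\|\sup_{\lambda>0}\lambda N_\lambda(\calM_t^{\calP,k',k''}f:t>0)^{1/2}\Big\|_{L^p(X,\mu)},
\]
with the supremum \emph{inside} the norm. But \eqref{jumpest} only controls $\sup_{\lambda>0}\|\lambda N_\lambda(\cdot)^{1/2}\|_{L^p}$, the supremum \emph{outside} the norm, which is in general strictly smaller. These two quantities are not comparable by any soft argument. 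The correct passage from (vi) to (v) is the one recorded in the paper's historical discussion: Bourgain's inequality
\[
\big\|V^r(\calM_t^{\calP,k',k''}f:t>0)\big\|_{L^{p,\infty}(X,\mu)}\lesssim_{p,r}\sup_{\lambda>0}\big\|\lambda N_\lambda(\calM_t^{\calP,k',k''}f:t>0)^{1/2}\big\|_{L^{p,\infty}(X,\mu)}
\]
(see \cite{B3}, also \cite{jsw,MSZ1}), combined with the fact that \eqref{jumpest} holds for \emph{every} $p\in(1,\infty)$, so that Marcinkiewicz interpolation upgrades the weak-type bound to the strong-type \eqref{thm:var}. Once you replace the ``pointwise embedding'' step by this norm-level argument, the proof is complete and agrees with the paper.
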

A few comments are in order.
\begin{enumerate}
    \item Corollary~\ref{thm:cor1} is the most general quantitative version of the one parameter ergodic theorem for both averages $\calA_t^{\calP,k',k''}$ and $\calH_t^{\calP,k',k''}$ (cf. \cite[Theorem 1.20]{MISZWR}), which concludes the work of many authors over last decades -- see Section~\ref{sec:hist} for details.
    \item The mean ergodic theorem \textit{(i)} easily follows from \textit{(ii) }and \textit{(iii)} by Lebesgue's dominated convergence theorem. Each inequality from \textit{(iv)}, \textit{(v)}, and \textit{(vi)} individually implies pointwise convergence \textit{(ii)} and the maximal estimate \textit{(iii)}. The jump inequality \textit{(vi)} implies the variational ergodic theorem \textit{(v)} in the full range $r\in(2,\infty)$. Hence, the inequality \eqref{thm:jump} can be seen as an $r=2$ endpoint for \eqref{thm:var}.
    \item Unfortunately, we do not know at this moment if the oscillation inequality \eqref{thm:osc} is any kind of endpoint for the variational inequality \eqref{thm:var}. A recent result from \cite{MSS} shows that the oscillation estimates cannot be interpreted as an endpoint in a way similar to how the jump inequalities are. See the discussion in \cite{MSS} and \cite{MISZWR}.
    \item The oscillation inequality~\eqref{thm:osc} for the ergodic averages $\calA_t^{\calP,k',k''}$ can be seen as a contribution to a problem posed by Rosenblatt and Wierdl \cite[Problem 4.12, p. 80]{RW} in the early 1990’s about uniform oscillation inequalities for the classical Birkhoff ergodic averages given by
    \begin{equation}\label{eq:birk}
        \frac{1}{2N+1}\sum_{n=-N}^Nf(S^nx).
    \end{equation}
    In 1998, Jones, Kaufman, Rosenblatt, and Wierdl \cite{jkrw} gave an affirmative answer to this problem. The inequality \eqref{oscillest} provides us with the uniform oscillation inequality for the counterpart of \eqref{eq:birk} along the prime numbers given by 
    \begin{equation*}
        \frac{1}{2|\PP_N|}\sum_{n=-N}^N{f(S^nx)\mathds{1}_{\PP}(|n|)},
    \end{equation*}
    where $\PP_N=\PP\cap[1,N]$. Moreover, the inequality \eqref{oscillest} is much more general than the originally posted problem since it concerns the multi-dimensional averages along arbitrary polynomials with integer coefficients.
    \item Parts \textit{(i)}, \textit{(ii)}, \textit{(iii)}, and \textit{(v)} for the standard averages $\calA_t^{\calP,k',k''}$ with $k''\geq1$ in the presented generality were first obtained by Trojan \cite{Troj}. In the case with $k''=0$ (excluding the prime numbers from the summation), the first proof of the variational inequality \eqref{thm:osc} in the full range $r\in(2,\infty)$ was given by Mirek, Stein, and Trojan \cite{MST2}.
    \item In the case of the Cotlar ergodic averages $\calH_t^{\calP,k',k''}$ with $k''\geq1$, the ergodic theorems \textit{(i)}, \textit{(ii)}, \textit{(iii)} and \textit{(v)} were proven by Trojan~\cite{Troj} under the gradient condition 
    \begin{equation}\label{eq:gradient}
        |x|^{k+1}|\nabla K(x)|\lesssim 1,
    \end{equation}
   but Trojan's argument can be adapted with small changes to deal with Calderón--Zygmund kernels which satisfy the more general condition \eqref{eq:K-modulus-cont}. In this case, the results for $k''\geq1$ seem to be completely new. For $\calH_t^{\calP,k,0}$, the jump inequality was obtained by Mirek, Stein, and Zorin-Kranich \cite{MSZ3}, and the oscillation ergodic theorem was obtained by the second author \cite{S}.
    \item The oscillation inequality \eqref{thm:osc} and the jump inequality \eqref{thm:jump} are completely new results for both types of averages when $k''\geq1$ and follows by Theorem~\ref{MAINthm}.
    When $k''=0$, the corresponding results for the jump inequalities are known due to the work of Mirek, Stein, and Zorin-Kranich \cite{MSZ3}. The uniform oscillation inequality was proven by Mirek, Słomian, and Szarek \cite{MSS} in the case of the averages $\calA_t^{\calP,k,0}$ and by the second author \cite{S} in the case of $\calH_t^{\calP,k,0}$.
    
\end{enumerate}
\subsection{Historical background}\label{sec:hist}
In 1931, Birkhoff \cite{Birk} and von Neumann \cite{Neu} proved that the averages
\begin{equation}\label{eq:birkhist}
    M_Nf(x):=\frac{1}{N}\sum_{n=1}^Nf(S^nx)
\end{equation}
converge pointwise $\mu$-almost everywhere on $X$ and in $L^p(X,\mu)$ norm respectively for any $f\in L^p(X,\mu)$, $p \in [1,\infty)$, as $N\to\infty$. In 1955, Cotlar \cite{COT} established the pointwise $\mu$-almost everywhere convergence on $X$ as $N\to\infty$ of the ergodic Hilbert transform given by
\begin{equation*}
    H_Nf(x):=\sum_{1\leq|n|\leq N} \frac{f(S^nx)}{n}
\end{equation*}
for any $f\in L^p(X,\mu)$. In 1968, Calder{\'o}n \cite{Cald} made an important observation (now called the Calder{\'o}n transference principle) that some results in ergodic theory can be easily deduced from known results in harmonic analysis. Namely, the convergence of the Birkhoff averages $M_N$ can be deduced from the boundedness of the Hardy--Littlewood maximal function, and the convergence of Cotlar's averages $H_N$ follows from the boundedness of the maximal function for the truncated discrete Hilbert transform. As we will see ahead, this observation has had a huge impact in the study of convergence problems in ergodic theory.

At the beginning of the 1980's, Bellow \cite{Bel} and independently Furstenberg \cite{Fus} posed the problem about pointwise convergence of the averages along squares given by
\begin{equation*}
    T_N f(x):=\frac{1}{N}\sum_{n=1}^N{f(S^{n^2}x)}.
\end{equation*}
Despite its similarity to Birkhoff's theorem, the problem of pointwise convergence of the $T_N$ averages has a totally different nature from that of its linear counterpart. In particular, the standard approach is insufficient in this case.

We briefly sketch the classical approach of handling the problem of pointwise convergence. It consists of two steps:
\begin{itemize}
    \item[(a)] Establish $L^p$-boundedness for the corresponding maximal function.
    \item[(b)] Find a dense class of functions in $L^p(X,\mu)$ for which the pointwise convergence holds. 
\end{itemize}
In the case of Birkhoff's averages $M_N$, the Calder{\'o}n transference principle allows one to deduce the estimate
\begin{equation*}
\|\sup_{N\in\NN}|M_Nf|\|_{L^p(X,\mu)}\lesssim_{p}\|f\|_{L^p(X,\mu)}
\end{equation*}
for $p\in(1,\infty]$ from the estimate for the discrete Hardy--Littlewood maximal function (and we have a weak-type estimate for $p=1$). In turn, estimates for the discrete Hardy--Littlewood maximal function follow easily from those for the continuous one. This establishes the first step (a). For the second step, one can use the idea of Riesz decomposition \cite{Riesz} to analyze the space ${\II}_S\oplus {\TT}_S\subseteq L^2(X,\mu)$, where
\begin{align*}
\qquad\qquad { \II}_S:=\{f\in L^2(X,\mu): f\circ S =f\}
\qquad \text{ and } \qquad
{\TT}_S:=\{h\circ S-h: h\in L^2(X,\mu)\cap L^{\infty}(X,\mu)\}.
\end{align*}
We see that $M_Nf=f$ for  $f\in {\II}_S$ and, for $g=h\circ S - h\in {\TT}_S$, we have 
\begin{equation*}
M_Ng(x)=\frac{1}{N}\big(h(S^{N+1}x)-h(Sx)\big)
\end{equation*}
by telescoping. Consequently, we see that $M_Ng\to0$ as $N\to\infty$.  This establishes $\mu$-almost everywhere pointwise convergence of $M_N$ on ${\II}_S\oplus {\TT}_S$, which is dense
in $L^2(X,\mu)$. Since $L^2(X,\mu)$ is dense in $L^p(X,\mu)$ for every $p\in[1,\infty)$, this establishes (b).

In the case of the quadratic averages $T_N$, the matter is more complicated. For the first step, by the Calder{\'o}n transference principle, it is enough to establish $\ell^p$ bounds for the maximal function given by
\begin{equation}\label{eq:maximalsq}
    \sup_{N\in\NN}\frac{1}{N}\sum_{n=1}^N f(x-n^2),\quad f\in\ell^p(\ZZ).
\end{equation}
The $\ell^p$ estimate for the above maximal function does not follow directly from the continuous counterpart and requires completely new methods. However, a more serious problem arises in connection with the second step. Namely, the idea of von Neumann fails in this case because the averages $T_N g$ do not possess the telescoping property for $g\in\TT_S$.   

At the end of the 1980's, Bourgain established the pointwise convergence of the averages $T_N$ in a series of groundbreaking articles \cite{B1,B2,B3}. By using the Hardy--Littlewood circle method from analytic number theory, he established $\ell^p$-bounds for the maximal function \eqref{eq:maximalsq}, which establishes step (a). He then bypassed the problem of finding the requisite dense class of functions by using the oscillation seminorm \eqref{def:osc}. Bourgain \cite{B3} proved that,
for any $\lambda>1$ and any sequence of integers $I=(I_j:{j\in\NN})$ with $I_{j+1}>2I_j$ for all $j\in\NN$, we have
\begin{align}\label{eq:non-uni}
\norm[\big]{O_{I, N}^2(T_{\lambda^n}f:n\in\NN)}_{L^2(X,\mu)}
\le C_{I,\lambda}(N)\norm{f}_{L^2(X,\mu)}, \qquad N\in\NN,
\end{align}
for any $f\in L^2(X,\mu)$ with $\lim_{N\to\infty} N^{-1/2}C_{I, \lambda}(N)=0$. This non-uniform inequality \eqref{eq:non-uni} suffices to establish the pointwise convergence of the averaging operators $T_Nf$ for any  $f\in L^2(X, \mu)$. In the same series of papers, by similar methods, Bourgain established the pointwise convergence of the averages along primes
\begin{equation*}
    \frac{1}{|\PP_N|}\sum_{n=1}^Nf(S^nx)\mathds{1}_{\PP}(n)
\end{equation*}
for $f\in L^p(X,\mu)$ with $p>\frac12(1+\sqrt{3})$. In the same year, Wierdl \cite{Wie} extended Bourgain's result to $p\in(1,\infty)$.

In order to establish the inequality \eqref{eq:non-uni}, Bourgain used the Hardy--Littewood circle method and $r$-variation seminorms $V^r$. The $r$-variations were introduced by L{\'e}pingle \cite{Lep} in the context of families of bounded martingales. In 1976, he proved that,
for all $r\in(2, \infty)$,
$p\in(1, \infty)$, and any family of bounded martingales
$(\mathfrak f_n\colon X\to \CC:n\in\NN)$, we have
\begin{align*}
\norm{V^r(\mathfrak f_n: n\in\NN)}_{L^p(X)}\lesssim_{p,r}\sup_{n\in\NN}\norm{\mathfrak f_n}_{L^p(X)}
\end{align*}
with the implicit constant depending only on $p$ and $r$. The above inequality is sharp in the sense that it fails for $r=2$, see \cite{JG} for a counterexample. 

Bourgain observed that the $V^r$ seminorm can be used to obtain \eqref{eq:non-uni}. This is because, by H{\"o}lder's inequality, we have 
\begin{equation*}
    O_{I,N}^2(T_n f:n\in\NN)\le N^{1/2-1/r}V^r(T_n f:n\in\NN)
\end{equation*}
for $r\ge2$. In order to prove the $r$-variational inequality for the averages $T_N$, Bourgain used the $\lambda$-jump counting function. It can easily be seen that 
\begin{align*}
\sup_{\lambda>0}\|\lambda N_{\lambda}(T_N f:N\in\NN)^{1/2}\|_{L^p(X,\mu)}\le \|V^r(T_N f:N\in\NN)\|_{L^p(X,\mu)}
\end{align*}
for every $r\ge2$. The above inequality can be reversed in some sense \cite{B3}. Namely, for any $p\in(1,\infty)$ and any $r\in(2,\infty)$, we have
\begin{align*}
\|V^r(T_N f:N\in\NN)\|_{L^{p,\infty}(X,\mu)}\lesssim_{p,r}
\sup_{\lambda>0}\|\lambda N_{\lambda}(T_N f:N\in\NN)^{1/2}\|_{L^{p,\infty}(X,\mu)}.
\end{align*}
For more details about oscillation, variation, and jump seminorms, we refer to \cite{MSS,MISZWR} and \cite{jsw}. 

The above arguments demonstrate that the problem of proving pointwise convergence can be reduced to proving an appropriate $r$-variational estimate or jump inequality. However, an intriguing question was the issue of uniformity in the inequality \eqref{eq:non-uni}. Shortly after the groundbreaking work of Bourgain, Lacey \cite[Theorem 4.23, p. 95]{RW} improved inequality \eqref{eq:non-uni} showing that, for every $\lambda>1$,
there is a constant $C_{\lambda}>0$ such that
\begin{align}
\label{eq:Lacey}
\sup_{N\in\NN}\sup_{I\in \mathfrak S_N(\LL_{\tau})}\norm[\big]{O_{I, N}^2(T_{\lambda^n}f:n\in\NN)}_{L^2(X)}
\le C_{\lambda}\norm{f}_{L^2(X)},\quad f\in L^2(X,\mu),
\end{align}
where $\LL_{\tau}:=\{\tau^n:n\in \NN\}$. This result motivated the question about uniform estimates independent of $\lambda>1$ in \eqref{eq:Lacey}. In the case of Birkhoff's averages, this question was explicitly formulated in \cite[Problem 4.12, p. 80]{RW}.

In 1998, Jones, Kaufman, Rosenblatt, and
Wierdl \cite{jkrw} established the uniform oscillation inequality on $L^p(X,\mu)$ for the standard Birkhoff averages $M_N$. Two years later, Campbell, Jones, Reinhold, and Wierdl \cite{CJRW1} established the uniform  oscillation inequality for the ergodic Hilbert transform. In 2003, Jones, Rosenblatt, and Wierdl \cite{jrw} proved uniform oscillation inequalities on $L^p(X,\mu)$ with $p\in(1,2]$ for the Birkhoff averages over cubes. However, the case of polynomial averages, even one-dimensional, was open until recent works \cite{MSS,S}, and the case of averages along primes was open until this paper.

In 2015, Mirek and Trojan \cite{MT}, using the ideas of Bourgain and Wierdl, established $\mu$-almost everywhere pointwise convergence of the Cotlar averages along the primes, 
\begin{equation*}
    \sum_{p\in(\pm\PP_N)} \frac{f(S^p)}{p}\log |p|.
\end{equation*}
They proved that the corresponding maximal function is bounded on $L^p(X,\mu)$ with $p>1$ and showed that the analogue of Bourgain's non-uniform oscillation inequality \eqref{eq:non-uni} holds for those averages.

In the same year, Zorin-Kranich \cite{zk} established the pointwise convergence of the averages related to the polynomial mapping given by
\begin{equation*}
    \Tilde{\calP}=(n,n^2,n^3,\ldots,n^d)\colon\ZZ\to\ZZ^d.
\end{equation*}
Namely, he proved that, for any $r>2$ and $|\frac{1}{p}-\frac{1}{2}|<\frac{1}{2(d+1)}$, we have the following $r$-variational estimate
\begin{equation*}
\|V^r(\calA_N^{\Tilde{\calP},1,0}f:N\in\NN)\|_{L^p(X,\mu)}\lesssim_{p,r}\|f\|_{L^p(X,\mu)}.
\end{equation*}
As a consequence, the averages $\calA_N^{\Tilde{\calP},1,0}f$ converge $\mu$-almost everywhere for any $f\in L^p(X,\mu)$. 

In 2016, Mirek and Trojan \cite{MT1} established the pointwise convergence for the averages \eqref{def:average} taken over cubes with $k'=k$, that is
\begin{equation*}
    A_{N,{\rm cube}}^{\calP,k,0} f(x):=\frac{1}{N^k}\sum_{y\in[0,N]^k\cap\ZZ^k}f(S_1^{\calP_1(y)}S_2^{\calP_1(y)}\cdots S_d^{\calP_d(y)}x).
\end{equation*}
There, Mirek and Trojan noted for the first time that the Rademacher--Menshov inequality~\eqref{eq:remark3} may be used to establish $r$-variational estimates. For $p\in(1,\infty)$ and $r>\max\{p, p/(p-1)\}$, they proved that
\begin{equation*}
    \|V^r(A_{N,{\rm cube}}^{\calP,k,0} f:N\in\NN)\|_{L^p(X,\mu)}\leq  C_{p,d,k,{\rm deg}\calP}\|f\|_{L^p(X,\mu)}.
\end{equation*}

Unfortunately, the methods introduced by Bourgain had limitations. These work perfectly fine in the case of the $L^2$ estimates, but, in the case of an $L^p$ estimates with $p\neq2$, there arise difficulties which are hard to overcome concerning the fractions around which major arcs are defined. However, Ionescu and Wainger \cite{IW}, in their groundbreaking 2005 work about discrete singular Radon operators, introduced a set of fractions for which the circle method can be applied towards $L^p$ estimates with $p\neq2$. 

In 2015, Mirek \cite{M} built a discrete counterpart of the Littlewood--Paley theory using the Ionescu--Wainger multiplier theorem and used it to reprove the main result from \cite{IW}. In 2017, Mirek, Stein, and Trojan \cite{MST1,MST2} further exploited these ideas together with the Rademacher--Menshov inequality from \cite{MT1} to obtain an $L^p$ estimate for the $r$-variation seminorm for both $\calA_t^{\calP,k,0}$ and $\calH_t^{\calP,k,0}$ associated with convex sets in the full range of parameters. Namely, they showed that
\begin{equation}\label{eq:var:MST}
\big\|V^r(\calM_t^{\calP,k,0}f: t>0)\big\|_{L^p(X, \mu)}\lesssim_{d,k,p, r,  \deg \mathcal P}\|f\|_{L^p(X, \mu)}
\end{equation}
for $p\in(1,\infty)$ and $r\in(2,\infty)$, where $\calM_t^{\calP,k,0}$ is either $\calA_t^{\calP,k,0}$ or $\calH_t^{\calP,k,0}$. There, the operators $\calH_t^{\calP,k,0}$ are related to Calder{\'o}n--Zygmund kernels satisfying the gradient condition \eqref{eq:gradient}.

In 2019, Trojan \cite{Troj} proved an $L^p$ estimate for the $r$-variation seminorm for both $\calA_t^{\calP,k',k''}$ and $\calH_t^{\calP,k',k''}$ with $k',k''\in\{0,1,\ldots,k\}$ such that $k'+k''=k$. Namely, he showed that
\begin{equation}\label{eq:trojan:var}
\big\|V^r(\calM_t^{\calP,k',k''}f: t>0)\big\|_{L^p(X, \mu)}\lesssim_{d,k,p, r,  \deg \mathcal P}\|f\|_{L^p(X, \mu)}
\end{equation}
for $p\in(1,\infty)$ and $r\in(2,\infty)$, where $\calM_t^{\calP,k',k''}$ is either $\calA_t^{\calP,k',k''}$ or $\calH_t^{\calP,k',k''}$. A straightforward consequence of the inequality \eqref{eq:trojan:var} is the $\mu$-almost everywhere convergence of the averages $\calM_t^{\calP,k',k''}f$. Again, the operators $\calH_t^{\calP,k',k''}$ there are related to Calder{\'o}n--Zygmund kernels satisfying the gradient condition \eqref{eq:gradient}.

In 2020, Mirek, Stein, and Zorin-Kranich \cite{MSZ3} further refined the methods developed in \cite{MST1,MST2} and proved a uniform $L^p$ estimate for the $\lambda$-jump counting function. They proved that  
\begin{equation}\label{eq:jump:MSZK}
    \sup_{\lambda>0}\norm[\big]{\lambda N_{\lambda}(\calM_t^{\calP,k,0}f:t>0)^{1/2}}_{L^p(X,\mu)}\le C_{p,d,k,{\rm deg}\calP}\|f\|_{L^p(X,\mu)}
\end{equation}
for any $p\in(1,\infty)$ and any $f\in L^p(X,\mu)$, where $\calM_t^{\calP,k,0}f$ is either $\calA_t^{\calP,k,0}f$ or $\calH_t^{\calP,k,0}f$. There, the operators $\calH_t^{\calP,k,0}$ are associated with Calder{\'o}n--Zygmund kernels satisfying the H{\"o}lder continuity condition generalizing \eqref{eq:K-modulus-cont}:
For some $\sigma \in (0,1]$ and for every $x, y\in\RR^k\setminus\{0\}$ with $2|y|\leq |x|$, we have
\begin{equation}
\label{eq:K-Holder}
\abs{K(x)-K(x+y)}
\lesssim 
\abs{y}^{\sigma} \abs{x}^{-(k+\sigma)}.
\end{equation}
It is worth noting that the inequality \eqref{eq:jump:MSZK} implies the $r$-variation inequality \eqref{eq:var:MST}. 

In 2021, the second author in collaboration with Mirek and Szarek~\cite{MSS} established the oscillation inequality
\begin{equation}\label{osc:MSS}
\sup_{N\in\NN}\sup_{I\in\mathfrak{S}_N(\RR_+)}\norm[\big]{O_{I,N}^2(\calA_t^{\calP,k,0}f:t>0)}_{L^p(X,\mu)}\le C_{p,d,k,{\rm deg}\calP}\norm{f}_{L^p(X,\mu)},
\end{equation}
and, recently, the second author \cite{S} proved the counterpart of \eqref{osc:MSS} in the case of the operators $\calH_t^{\calP,k,0}$ related to Calder{\'o}n--Zygmund kernels satisfying \eqref{eq:K-Holder}.
\section{Notation and necessary tools}
\subsection{Basic notation}\label{sec:notation}
We denote $\NN:=\{1, 2, \ldots\}$, $\NN_0:=\{0,1,2,\ldots\}$, and $\RR_+:=(0, \infty)$. For
$d\in\NN$, the sets $\ZZ^d$, $\RR^d$, $\CC^d$, and $\TT^d = (\RR/\ZZ)^d \equiv[-1/2, 1/2)^d$ have the standard meanings. For each $N\in\NN$, we set
    \[
    \NN_N:=\{1,\ldots, N\}.
    \]  
For any $x\in\RR$, we set
\[
\lfloor x \rfloor: = \max\{ n \in \ZZ : n \le x \}.
\]
For $u\in\NN$, we define the set
\begin{align*}
     2^{u\NN}:=\{2^{un}\colon n\in\NN\}.
\end{align*}

For two non-negative numbers $A$ and $B$, we write $A \lesssim B$ to indicate that $A\le CB$ for some $C>0$ that may change from line to line, and we may write $\lesssim_{\delta}$ if the implicit constant depends on $\delta$.

We denote the standard inner product on $\RR^d$ by $ x\cdot\xi$. Moreover, for any $x\in\RR^d$, we denote the $\ell^2$-norm and the maximum norm respectively by
\begin{align*}
\abs{x}:=\abs{x}_2:=\sqrt{\ipr{x}{x}} \qquad \text{ and } \qquad |x|_{\infty}:=\max_{1\leq k\leq d}|x_k|.
\end{align*}

For a multi-index $\gamma=(\gamma_1,\dots,\gamma_k)\in\N^k_0$, we abuse the notation to write
$|\gamma|:=\gamma_1+\cdots+\gamma_k$. No confusion should arise since all multi-indices will be denoted by $\gamma$.
\subsection{Seminorms}\label{notation:semi}
Let $\II \subseteq \RR$ and $\lambda > 0$. For $N\in\NN\cup\{\infty\}$, we denote by $\mathfrak S_N(\II)$ the family of all strictly increasing
sequences of length $N+1$ contained in $\II$. We already defined the oscillation seminorm \eqref{def:osc} and the $\lambda$-jump counting function \eqref{def:jump} in the introduction. For any $r\in[1,\infty)$, the $r$-\textit{variation seminorm} $V^r$ of a function $f\colon\II\to\CC$ is defined by
\begin{equation}\label{def:var}
	V^r(f(t) : t \in \II): = \sup_{\substack{t_0 < t_1 < \cdots < t_J \\ t_j \in \II}}
	\Big(\sum_{j = 1}^J \abs{f(t_j) - f(t_{j-1})}^r\Big)^{1/r}.
\end{equation}
The $r$-variational seminorm controls the oscillation seminorm and the $\lambda$-jump counting function. Indeed, by H\"older's inequality, we have
\begin{equation}\label{oscildom}
	O_{I,N}^2(f(t) : t \in \II) \leq N^{1/2-1/r} V^r(f(t) : t \in \II)
\end{equation}
for any $N\in\NN$, $I\in\mathfrak{S}_N(\II)$, and $r \geq 2$. Moreover, for any $\lambda>0$, we have
\begin{equation}\label{jumpdom}
\lambda N_\lambda(f(t) : t \in \II)^{1/r} \leq V^r(f(t) : t \in \II).
\end{equation}
We adopt notation to simultaneously handle the oscillation seminorm and the $\lambda$-jump counting function for the sake of brevity and to emphasize the required properties. Let $E$ be either of $\RR^d$ or $\ZZ^d$ with the usual measures and let $(f_t:t\in\II) \subset L^p(E)$. We write 
\[\calS_E^p(f_t : t \in \II)\]
to represent either of the following quantities:
\[\sup_{N \in \NN}\sup_{I \in \frkS_N(\II)} \big\| O_{I,N}^2(f_t(x) : t \in \II) \big\|_{L^p(E)}\quad\text{or}\quad\sup_{\lambda > 0} \big\| \lambda N_\lambda(f_t(x) : t \in \II)^{1/2} \big\|_{L^p(E)}.\]
\begin{proposition}
\label{prop:subadditive}
Let $p\in(1,\infty)$ and $\II\subseteq\RR$. The seminorm $\calS_E^p$ is subadditive up to a positive constant, that is,
\begin{equation*}
    \calS_E^p(f_t+g_t:t\in\II)\lesssim  \calS_E^p(f_t:t\in\II)+ \calS_E^p(g_t:t\in\II),
\end{equation*}
where the implied constant is independent of $\II$ and the families $(f_t:t\in\II)$ and $(g_t:t\in\II)$. 
\end{proposition}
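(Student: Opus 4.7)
The plan is to split on which functional $\calS_E^p$ represents. For the oscillation case pointwise subadditivity works directly: the triangle inequality gives
\[
\sup_{I_j\le t<I_{j+1}}\bigl|(f_t+g_t)(x)-(f_{I_j}+g_{I_j})(x)\bigr|\le \sup_{I_j\le t<I_{j+1}}|f_t(x)-f_{I_j}(x)|+\sup_{I_j\le t<I_{j+1}}|g_t(x)-g_{I_j}(x)|,
\]
and Minkowski's inequality on the outer $\ell^2$ sum in $j\in\{1,\dots,N\}$ yields $O^2_{I,N}(f_t+g_t)\le O^2_{I,N}(f_t)+O^2_{I,N}(g_t)$ pointwise in $x$, after which taking $L^p(E)$ norms and suprema over $N$ and $I\in\mathfrak{S}_N(\II)$ closes this case with constant~$1$.

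For the jump case, the naive pointwise bound $N_\lambda(f+g)\le N_{\lambda/2}(f)+N_{\lambda/2}(g)$ cannot be expected to hold -- one can arrange offsetting $f$- and $g$-jumps that create arbitrarily long chains of $\lambda$-jumps in $f+g$ while neither $f$ nor $g$ admits any chain of $\lambda/2$-jumps -- so I route through the auxiliary disjoint-jumps functional
\[
\widetilde N_\lambda(\phi):=\sup\bigl\{M\in\NN:\ \exists\,u_1<v_1\le u_2<v_2\le\cdots\le u_M<v_M \text{ in }\II,\ \min_{1\le i\le M}|\phi(v_i)-\phi(u_i)|\ge\lambda\bigr\},
\]
which obviously satisfies $N_\lambda(\phi)\le\widetilde N_\lambda(\phi)$. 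Two pointwise facts are then needed: (a) $\widetilde N_\lambda(f+g)\le\widetilde N_{\lambda/2}(f)+\widetilde N_{\lambda/2}(g)$, which is immediate after partitioning the witnessing disjoint intervals of $f+g$ according to whether $|f(v_i)-f(u_i)|\ge\lambda/2$ or $|g(v_i)-g(u_i)|\ge\lambda/2$ (disjointness is inherited by each sub-family); and (b) $\widetilde N_\lambda(\phi)\le N_{\lambda/2}(\phi)$, which is the only real obstacle.

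The proof of (b) is the heart of the argument. Given witnessing intervals $[u_i,v_i]$ for $\widetilde N_\lambda(\phi)=M$, I build an increasing sequence in $\II$ starting from $u_1,v_1$ and, at each step $i=1,\ldots,M-1$ with $v_i$ as the most recent entry, append either the pair $u_{i+1},v_{i+1}$ when $|\phi(u_{i+1})-\phi(v_i)|\ge\lambda/2$ (in which case both new consecutive increments have magnitude $\ge\lambda/2$ and $\ge\lambda$ respectively), or just $v_{i+1}$ when $|\phi(u_{i+1})-\phi(v_i)|<\lambda/2$ (in which case the triangle inequality forces $|\phi(v_{i+1})-\phi(v_i)|\ge\lambda-\lambda/2=\lambda/2$). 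Either branch contributes at least one new admissible $(\ge\lambda/2)$-increment per step, so the final sequence realises $M$ consecutive admissible increments, giving $N_{\lambda/2}(\phi)\ge M$.

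Combining (a) and (b) produces the pointwise inequality $N_\lambda(f+g)\le N_{\lambda/4}(f)+N_{\lambda/4}(g)$, and hence by $\sqrt{a+b}\le\sqrt a+\sqrt b$,
\[
\lambda N_\lambda(f_t+g_t:t\in\II)^{1/2}\le 4\bigl((\lambda/4)\,N_{\lambda/4}(f_t:t\in\II)^{1/2}+(\lambda/4)\,N_{\lambda/4}(g_t:t\in\II)^{1/2}\bigr).
\]
Taking the $L^p(E)$ norm and then the supremum over $\lambda>0$ (which coincides with the supremum over $\lambda/4>0$) completes the jump case with constant~$4$, so the proposition holds with $C\le 4$.
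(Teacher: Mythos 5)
Your proof is correct, but it takes a genuinely different route from the paper. The oscillation half is handled identically (triangle inequality plus Minkowski in $\ell^2_j$, constant $1$). For the jump half, the paper does not argue pointwise at all: it simply invokes \cite[Corollary 2.11]{MSZ3}, i.e.\ the fact that the quasi-seminorm $\sup_{\lambda>0}\|\lambda N_\lambda(\cdot)^{1/2}\|_{L^p}$ is \emph{equivalent} to a genuinely subadditive seminorm, a structural result obtained there via the real-interpolation characterization of jump spaces. You instead prove an elementary, self-contained pointwise inequality $N_\lambda(f+g)\le N_{\lambda/4}(f)+N_{\lambda/4}(g)$ by passing through the disjoint-intervals count $\widetilde N_\lambda$; your key step (b), chaining the disjoint witnessing intervals into a single admissible sequence by appending either $u_{i+1},v_{i+1}$ or just $v_{i+1}$ according to the size of $|\phi(u_{i+1})-\phi(v_i)|$, is valid (note that the degenerate case $v_i=u_{i+1}$ automatically falls into the second branch, so strict monotonicity of the sequence is preserved), and the invariance of $\sup_{\lambda>0}$ under rescaling $\lambda\mapsto\lambda/4$ closes the argument with constant $4$. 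What each approach buys: yours is elementary and gives an explicit constant for the two-term statement, which is all the proposition claims; the paper's citation yields the stronger fact of an equivalent honest seminorm, hence subadditivity over \emph{countably many} summands with a single uniform constant --- which is what is actually exploited later in the major-arc decomposition (sums over $s\in 2^{u\NN}$ and over dyadic blocks). Iterating your quasi-triangle inequality over $n$ terms degrades the constant (the $\lambda/(2n)$ rescaling costs a factor of order $n$), so your argument proves the stated proposition but would not substitute for the equivalent-seminorm input where infinite decompositions are summed.
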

The critical point is that the jump quasi-seminorm admits an equivalent subadditive seminorm, see \cite[Corollary 2.11]{MSZ3}.

\begin{remark}[Rademacher--Menshov inequality]\label{rem:RM} 
By inequalities \eqref{oscildom} and \eqref{jumpdom}, we deduce that the Rademacher--Menshov inequality \cite[Lemma 2.5, p. 534]{MSZ2} holds for  $\calS_E^p$. Namely, for any $k,m\in\NN$ with $k<2^m$ and any sequence of functions $(f_n:n\in\NN)\subset L^p(E)$, we have
\begin{equation}\label{eq:remark3}
    \calS_E^p(f_n:k\le n\le 2^m)\le\sqrt{2}\norm[\Big]{\sum_{i=1}^s\Big(\sum_{j}{|f_{u_{j+1}^i}-f_{u_j^i}|^2}\Big)^{1/2}}_{L^p(E)},
\end{equation}
where each $[u_j^i,u_{j+1}^i)$ is a dyadic interval contained in $[k, 2^m]$ of the form $[j2^i,(j+1)2^{i})$ for some $0\le i\le m$ and $0\le j\le 2^{m-i}-1$.
\end{remark}
For more information about the $\lambda$-jump counting function and the oscillation and $r$-variation seminorms, we refer to \cite{B3,jsw,MISZWR,MSZ1,S1}.

\subsection{Reductions: Calder{\'o}n transference and lifting}\label{sec:red}
By the Calder\'on transference principle \cite{Cald}, we may restrict attention to the model dynamical system of $\ZZ^d$ equipped with the counting measure and the shift operators $S_j\colon\ZZ^d\to\ZZ^d$ given by $S_j(x_1,\ldots,x_d):=(x_1,\ldots,x_j-1,\ldots,x_d)$. We denote the corresponding averaging operators by
\[ 
A_t^{\calP,k',k''} f(x) = 
	\frac{1}{\vartheta_\Omega(t)}
	\sum_{(n,p)\in\ZZ^{k'}\times(\pm\PP)^{k''}}
	f\big(x - \calP(n, p)\big)\ind{\Omega_t}(n, p) 
	\Big(\prod_{j = 1}^{k''} \log |p_j|\Big) 
\]
and
\[ 
H_t^{\calP,k',k''} f(x) = \sum_{(n,p)\in\ZZ^{k'}\times(\pm\PP)^{k''}}
	f\big(x - \calP(n, p)\big) K(n, p) \ind{\Omega_t}(n, p) \Big(\prod_{j = 1}^{k''} \log \abs{p_j}\Big). 
\]

Moreover, by a standard lifting argument, it suffices to
prove Theorem~\ref{MAINthm} for a canonical case of the polynomial mapping $\calP$. Let $\mathcal{P}$ be a polynomial mapping as in \eqref{polymap}. We define 
\begin{equation*}
    {\rm deg}\, \mathcal{P}:=\max\{{\rm deg}\, \mathcal{P}_j: 1\le j\le d\}
\end{equation*}
and consider the set of multi-indices 
\begin{equation*}
    \Gamma:=\big\{\gamma\in\N_0^k\setminus\{0\}: 0<|\gamma|\le {\rm deg}\, \mathcal{P}\big\}
\end{equation*}
equipped with the lexicographic order. 
We define the \emph{canonical polynomial mapping} by
\begin{equation}\label{canonicalpolymap}
    \RR^k\ni x=(x_1,\dots,x_k)\mapsto\calQ(x):=(x^\gamma\colon\gamma\in\Gamma)\in\RR^\Gamma,
\end{equation} 
where $x^\gamma=x_1^{\gamma_1}x_2^{\gamma_2}\cdots x_k^{\gamma_k}$.
By invoking the lifting procedure described in \cite[Lemma 2.2]{MST1} (see also \cite[Section 11]{bigs}), the following implies Theorem \ref{MAINthm}.
\begin{theorem}\label{maintheorem2}
Let $k\in\NN$, let $\Gamma \subset \NN^{k} \setminus \{0\}$ be a nonempty finite set, and let $k',k''\in\{0,1,\ldots,k\}$ with $k'+k''=k$. Let $M_t^{k',k''}$ be either $A_t^{\calQ,k',k''}$ or $H_t^{\calQ,k',k''}$. For any $p\in(1,\infty)$, there is a constant $C_{p,k,|\Gamma|}>0$ such that 
\begin{equation}
    \calS_{\ZZ^\Gamma}^p(M_t^{k',k''}f:t>0)\leq C_{p,k,|\Gamma|}\|f\|_{\ell^p(\ZZ^\Gamma)}.
\end{equation}
\end{theorem}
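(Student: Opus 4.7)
The plan is to analyze the Fourier multipliers of the convolution operators $M_t^{k',k''}$ on $\ZZ^\Gamma$ using the Hardy--Littlewood circle method. Writing $M_t^{k',k''} f = f * \calK_t$, the symbol is
\[
m_t(\xi) = \sum_{(n,p) \in \ZZ^{k'} \times (\pm\PP)^{k''}} w_t(n,p)\, e^{-2\pi i \xi \cdot \calQ(n,p)}\, \ind{\Omega_t}(n,p),
\]
where $w_t(n,p)$ equals either $\vartheta_\Omega(t)^{-1}\prod_{j=1}^{k''}\log|p_j|$ in the averaging case or $K(n,p)\prod_{j=1}^{k''}\log|p_j|$ in the Cotlar case; the latter additionally requires a dyadic annular decomposition of $\Omega_t$ so as to exploit the cancellation~\eqref{eq:cancel} and the smoothness~\eqref{eq:K-modulus-cont}.

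By the subadditivity of $\calS_{\ZZ^\Gamma}^p$ (Proposition~\ref{prop:subadditive}), I would split the family $(M_t^{k',k''})_{t>0}$ into short variations within each dyadic block $[2^n, 2^{n+1})$ and a long family indexed by $t = 2^n$, $n \in \NN_0$. The short-scale piece follows from a square-function estimate on the dyadic increments combined with the Rademacher--Menshov inequality (Remark~\ref{rem:RM}), using pointwise kernel bounds obtained by comparing $m_t$ at consecutive dyadic times.

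For the long family, I would apply the circle method to $m_{2^n}(\xi)$. On minor arcs, the generalized Weyl inequality (Proposition~\ref{weylinq}) gives polynomial decay in $2^n$, which is more than enough to apply Rademacher--Menshov and collapse the corresponding contribution into an $\calS^p$-bounded square function. On major arcs around each rational $a/q$, the approximation in Lemma~\ref{multiplierapprox} replaces $m_{2^n}(\xi)$ by a product $G_n(a/q)\,\Phi_n(\xi-a/q)$ of a Gauss--Ramanujan type arithmetic factor and a smooth continuous-model multiplier. I would then invoke the Ionescu--Wainger multiplier theorem to build, uniformly in $p$, a Littlewood--Paley projection supported near the dense set of admissible fractions, and use the Magyar--Stein--Wainger sampling principle to transfer the problem to the continuous oscillation/jump estimate for $\Phi_n$, which is a known variant of Stein--Wainger theory. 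A final Rademacher--Menshov step over long dyadic blocks converts the resulting square function into the desired $\calS^p$ bound.

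The main obstacle is handling the $k''$ prime coordinates uniformly in $L^p$. The von Mangoldt weight forces the arithmetic factors to involve exponential sums over primes in arithmetic progressions, so the effective Weyl savings and the admissible denominator growth in the Ionescu--Wainger decomposition depend on Siegel--Walfisz type estimates; all quantitative thresholds (major-arc width, range of $q$, exponent in the Weyl bound, smallness needed for the $\Phi_n$ approximation error) must be chosen compatibly and with constants independent of the coefficients of $\calQ$. Coordinating these inputs simultaneously for the averaging and Cotlar operators, and uniformly in $p \in (1,\infty)$, is the delicate step where the refinements of \cite{MST2,MSZ3,Troj,S,MSS} enter in an essential way.
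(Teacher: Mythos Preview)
Your high-level architecture matches the paper's: circle method on the symbol, Ionescu--Wainger projections for the major arcs, Rademacher--Menshov, and Magyar--Stein--Wainger transference to the continuous model. But there is a real gap in your choice of time scales, and it propagates through the argument.

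You split along the dyadic sequence $t = 2^n$, whereas the paper uses the subexponential sequence $N_n = \lfloor 2^{n^\tau}\rfloor$ with $\tau \in (0,1)$, and this is essential when $k'' \ge 1$. First, Proposition~\ref{weylinq} gives only logarithmic savings $(\log N)^{-\alpha}$, not the ``polynomial decay in $2^n$'' you assert; along $N_n$ this reads $n^{-\alpha\tau}$, polynomial in the index $n$, and is usable only because $\alpha$ can be taken arbitrarily large. Second, and more seriously, your short-variation step collapses with dyadic blocks: the crude $V^1$ bound over $[N_n, N_{n+1})$ is of size $(\vartheta_\Omega(N_{n+1}) - \vartheta_\Omega(N_n))/\vartheta_\Omega(N_n) \sim n^{\tau-1}$ for subexponential scales (hence square-summable in $n$) but is $\sim 1$ for dyadic ones. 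There is no Rademacher--Menshov or square-function substitute available here, because the von~Mangoldt weights obstruct the pointwise kernel comparisons that one exploits in the $k'' = 0$ setting of \cite{MSZ3}. Finally, on the major arcs the paper does not pass directly from Ionescu--Wainger to the sampling principle: it first decomposes the rationals into annuli $\Sigma_s$, and within each annulus splits into small scales (Rademacher--Menshov combined with the Gauss-sum decay $|G(a/q)|\lesssim s^{-\delta}$) and large scales (where the sampling principle applies after approximating by the continuous multiplier). The summability in $s$ that closes the argument comes from this two-tier decomposition, which your outline does not supply.
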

\subsection{Fourier transform and Ionescu--Wainger multiplier theorem} 

Let $\GG=\RR^d$ or $\GG=\ZZ^d$ and let $\GG^*$ denote the dual group of $\GG$. For every $z\in\CC$, we set $\ex(z):=e^{2\pi {\bm i} z}$, where ${\bm i}^2=-1$. Let $\calF_{\GG}$ denote the Fourier transform on $\GG$ defined for any $f \in L^1(\GG)$ by
\begin{align*}
\calF_{\GG} f(\xi) := \int_{\GG} f(x) \ex(x\cdot\xi) {\rm d}\mu(x),\quad \xi\in\GG^*,
\end{align*}
where $\mu$ is the usual Haar measure on $\GG$. For any bounded function $\mathfrak m\colon \GG^*\to\CC$, we define the corresponding Fourier multiplier operator by 
\begin{align}
\label{eq:fourier mult}
T_{\GG}[\mathfrak m]f(x):=\int_{\GG^*}\ex(-\xi\cdot x)\mathfrak m(\xi)\calF_{\GG}f(\xi){\rm d}\xi, \quad x\in\GG.
\end{align}
Here, we assume that $f\colon\GG\to\CC$ is a compactly supported function on $\GG$ (and smooth if $\GG=\RR^d$) or any other function for which \eqref{eq:fourier mult} makes sense.

An indispensable tool in the proof of Theorem~\ref{maintheorem2} is the vector-valued Ionescu--Wainger multiplier theorem from \cite[Section 2]{MSZ3} with an improvement by Tao \cite{TaoIW}.
\begin{theorem}\label{IW}
For every $\varrho>0$, there exists a family $(P_{\leq N})_{N\in\NN}$ of subsets of $\NN$ such that:  
\begin{enumerate}[label*={(\roman*)}]
\item \label{IW1} $\NN_N\subseteq P_{\leq N}\subseteq\NN_{\max\{N, e^{N^{\varrho}}\}}$.
\item \label{IW2}  If $N_1\le N_2$, then $P_{\leq N_1}\subseteq P_{\leq N_2}$.
\item \label{IW3}  If $q \in P_{\leq N}$, then all factors of $q$ also lie in $P_{\leq N}$.
\item \label{IW4} $\lcm(P_N) \leq 3^N$. 
\end{enumerate}

Furthermore, for every $p \in (1,\infty)$, there exists  $0<C_{p, \varrho, |\Gamma|}<\infty$ such that, for every $N\in\NN$, the following holds:

Let $0<\varepsilon_N \le e^{-N^{2\varrho}}$ and let $\mathbf Q:=[-1/2, 1/2)^\Gamma$ be a unit cube. Let $\frkM \colon\RR^{\Gamma} \to L(H_0,H_1)$ be a measurable function supported on $\varepsilon_{N}\mathbf Q$ taking values in $L(H_{0},H_{1})$, the space  of bounded linear operators between separable Hilbert spaces $H_{0}$ and $H_{1}$.
Let $0 \leq \mathbf A_{p} \leq \infty$ denote the smallest constant such that
\[
\big\|T_{\RR^\Gamma}[\frkM ]f\big\|_{L^{p}(\RR^{\Gamma};H_1)}
\leq
\mathbf A_{p} \|f\|_{L^{p}(\RR^{\Gamma};H_0)}
\]
for every function $f\in L^2(\RR^\Gamma;H_0)\cap L^{p}(\RR^\Gamma;H_0)$. Then, the multiplier
\[
\Delta_N(\xi)
:=\sum_{b \in\Sigma_{\leq N}}
\frkM (\xi - b),
\]
where $\Sigma_{\leq N}$ is defined by
\[
\Sigma_{\leq N} := \Big\{ \frac{a}{q}\in\QQ^\Gamma\cap\TT^\Gamma:  q \in P_{\leq N}\text{ and } {\rm gcd}(a, q)=1\Big\},
\]
satisfies
\begin{equation}\label{eq:iw:main}
\big\|T_{\ZZ^\Gamma}[\Delta_{N}]f\big\|_{\ell^p(\ZZ^{\Gamma};H_1)}
\le C_{p,\varrho,|\Gamma|} (\log N) \mathbf A_{p}
\|f\|_{\ell^p(\ZZ^{\Gamma};H_0)}
\end{equation}
for every $f\in \ell^p(\ZZ^\Gamma;H_0)$, (cf. \cite[Theorem 1.4]{TaoIW} which removes the factor of $\log N$ in the inequality~\eqref{eq:iw:main}).
\end{theorem}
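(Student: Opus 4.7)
The plan is to follow the Ionescu--Wainger strategy: build $P_{\leq N}$ from a multiplicative stratification of prime powers, and then control the periodization $\Delta_N$ by reducing to one fraction at a time via the Magyar--Stein--Wainger sampling principle, summing the resulting estimates via a square-function argument.

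I would first construct $P_{\leq N}$ as follows. Fix $\varrho>0$ and for $j \leq N^{\varrho}$ consider the dyadic prime blocks $\PP_j := \{p \in \PP : 2^{j-1} < p \leq 2^j\}$. Let $P_{\leq N}$ consist of all integers $q = \prod_j q_j$, where each $q_j$ is a product of prime powers with primes drawn from $\PP_j$, subject to a total size budget. Properties \ref{IW1}--\ref{IW4} then follow by bookkeeping: $\NN_N \subseteq P_{\leq N}$ since every $n \leq N$ factors through primes $\leq N$; closure under divisors holds by construction; and $\lcm(P_{\leq N}) \leq 3^N$ is ensured by capping the admissible prime content so that the total multiplicative contribution from all blocks is bounded by $N \log 3$.

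For the analytic bound, the starting observation is that for a single $b = a/q \in \Sigma_{\leq N}$, the Magyar--Stein--Wainger sampling principle transfers the continuous multiplier bound to its shifted discrete counterpart:
\[
\big\|T_{\ZZ^\Gamma}[\frkM(\cdot - b)]\big\|_{\ell^p(\ZZ^\Gamma;H_0) \to \ell^p(\ZZ^\Gamma;H_1)} \lesssim \mathbf{A}_p,
\]
uniformly in $b$. This uses $\varepsilon_N \leq e^{-N^{2\varrho}}$ to guarantee that $\supp \frkM$ has scale much finer than $1/q \geq 3^{-N}$. For $p=2$ the stated bound then follows from Plancherel and the (near-)disjointness of the supports of $\frkM(\cdot - b)$ for distinct $b \in \Sigma_{\leq N}$.

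The main obstacle is passing from $p=2$ to general $p \in (1,\infty)$ with only the logarithmic loss $\log N$. My plan is to group the fractions by the prime-block profile of their denominators and proceed by induction on the number of active blocks. At each inductive step, I would introduce a smooth cutoff around each fraction $b$ supported at scale just above $\varepsilon_N$, apply Khintchine's inequality with random $\pm 1$ signs to replace the pointwise sum by a square function, and reduce to a single-block periodization handled by the sampling estimate above together with a Littlewood--Paley decomposition for the continuous multiplier $\frkM$. The $\log N$ factor in \eqref{eq:iw:main} arises naturally by telescoping through the $O(\log N)$ multiplicative levels built into $P_{\leq N}$; Tao's refinement in \cite{TaoIW} removes this factor via a sharper randomized stratification, but the $\log N$ loss is acceptable for the applications in this paper.
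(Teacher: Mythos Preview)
The paper does not prove Theorem~\ref{IW}; it is stated as a black-box tool, attributed to \cite[Section~2]{MSZ3} with Tao's refinement \cite{TaoIW} noted parenthetically. So there is no proof in the paper to compare your proposal against.

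That said, your sketch is a fair high-level caricature of the Ionescu--Wainger strategy, but it is vague precisely at the points where the real work lies. The construction of $P_{\leq N}$ is more delicate than ``products of prime powers from blocks subject to a size budget'': one needs a specific combinatorial stratification (in \cite{MSZ3} this is done via ordered tuples of prime powers of controlled height, grouped into ``cells'') so that the resulting set of fractions $\Sigma_{\leq N}$ carries a \emph{superorthogonality} structure. Your inductive step (``Khintchine with random signs, reduce to a single block'') is in the right spirit, but the actual mechanism is not a naive Khintchine argument; it is an iterated square-function estimate exploiting that distinct fractions in the same stratum are genuinely orthogonal modulo a controlled denominator, which is what permits the $\ell^p$ bound with only logarithmic loss rather than the trivial loss of $|\Sigma_{\leq N}|$. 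Your account of where the $\log N$ comes from (``telescoping through $O(\log N)$ levels'') is also not quite right: in \cite{MSZ3} the logarithm enters through the number of strata in the prime-power decomposition, and it is exactly this step that Tao \cite{TaoIW} sharpens via an adelic reformulation. If you intend to actually prove the theorem rather than cite it, you would need to make the stratification and the superorthogonality induction explicit; as written, the proposal is a plausible outline but not a proof.
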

\section{Preliminaries}
\subsection{General results} In this section, we present some general results concerning the behavior of exponential sums. The following proposition is an enhancement of the variant of Weyl's inequality due to Trojan \cite[Theorem 2]{Troj} that allows us to estimate exponential sums related to a possibly non-differentiable function $\phi$, (cf. \cite[Theorem A.1]{MSZ3}).
\begin{proposition}[Weyl's inequality]\label{weylinq}
Let $\alpha >0$, $k\in\NN$, and let $\Gamma \subset \NN^{k} \setminus \{0\}$ be a nonempty finite set. Let $\Omega'\subseteq\Omega\subseteq B(0,N)\subset\RR^k$ be convex sets and let $\phi\colon \Omega\cap\ZZ^{k}\to\CC$. There is $\beta_\alpha>0$ such that, for any $\beta > \beta_\alpha$, if there is a multi-index $\gamma_0\in\Gamma$ with
\begin{align*}
\abs[\Big]{\xi_{\gamma_0} - \frac{a}{q}}
\leq
\frac{1}{q^2}
\end{align*}
for some coprime integers $a$ and $q$ with $1\leq a\leq q$ and $(\log N)^\beta\leq q\leq N^{|\gamma_0|}(\log N)^{-\beta}$, then
\begin{align*}
\abs[\Big]{\sum_{(n,p)\in\ZZ^{k'}\times(\pm\PP)^{k''}}\ex(\xi\cdot\calQ(n,p))\phi(n,p)\mathds{1}_{\Omega\setminus\Omega'}(n,p)}
&\lesssim
N^k\log(N)^{-\alpha}\norm{\phi}_{L^\infty(\Omega\setminus\Omega')} \\
&\,\,+N^{k} \sup_{\substack{\abs{x-y}\leq N(\log N)^{-\alpha}\\x,y\in\Omega\setminus\Omega'}} \abs{\phi(x)-\phi(y)}.
\end{align*}
The implicit constant is independent of the function $\phi$, the variable $\xi$, the sets $\Omega,\Omega'$, and the numbers $a$, $q$, and $N$.
\end{proposition}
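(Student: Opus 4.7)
The plan is to reduce to the original version of Trojan's Weyl inequality \cite[Theorem 2]{Troj} (which applies to constant weights) by partitioning $\Omega \setminus \Omega'$ into cubes on which $\phi$ is essentially constant; this is analogous to the treatment in \cite[Theorem A.1]{MSZ3}.

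Set $L := N(\log N)^{-\alpha}$, tile $\RR^k$ by axis-aligned cubes $\{Q_j\}$ of side length $L$, and keep those cubes that meet $\Omega \setminus \Omega'$. Call $Q_j$ \emph{interior} if $Q_j \subseteq \Omega \setminus \Omega'$ and \emph{boundary} otherwise. For each $j$, fix a distinguished lattice point $y_j \in Q_j \cap \ZZ^k$. Writing $\phi(n,p) = \phi(y_j) + (\phi(n,p) - \phi(y_j))$ on $Q_j$, the left-hand side of the claimed estimate splits as $\sum_{j} \phi(y_j) S_j + \sum_{j} T_j$, where
\[
S_j := \sum_{(n,p)\, \in\, Q_j \cap (\Omega \setminus \Omega') \cap (\ZZ^{k'}\times(\pm\PP)^{k''})} \ex(\xi \cdot \calQ(n,p))
\]
and $T_j$ is the same sum weighted by $\phi(n,p) - \phi(y_j)$. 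Each $T_j$ satisfies $|T_j| \leq \#(Q_j \cap \ZZ^k) \cdot \sup_{|x-y|\leq L\sqrt{k},\, x,y \in \Omega \setminus \Omega'} |\phi(x)-\phi(y)|$, so the collective contribution of the $T_j$'s is at most $N^k$ times the modulus of continuity of $\phi$ at scale $L\sqrt{k}$, matching (up to a constant) the second term on the right-hand side of the claim.

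The main term $\sum_j \phi(y_j) S_j$ is then estimated cube by cube. For interior $Q_j$, Trojan's original Weyl inequality applied at scale $L$ on the convex set $Q_j$ gives $|S_j| \lesssim L^k (\log L)^{-\alpha}$; summing over the $\lesssim (N/L)^k$ interior cubes weighted by $\|\phi\|_{L^\infty(\Omega\setminus\Omega')}$ and using $\log L \sim \log N$ yields $\lesssim \|\phi\|_{L^\infty(\Omega\setminus\Omega')} N^k (\log N)^{-\alpha}$. For boundary cubes, of which there are $\lesssim (N/L)^{k-1}$ by convexity of $\Omega$ and $\Omega'$, the trivial bound $|S_j| \lesssim L^k$ gives a total contribution $\lesssim \|\phi\|_{L^\infty(\Omega\setminus\Omega')} N^{k-1} L = \|\phi\|_{L^\infty(\Omega\setminus\Omega')} N^k (\log N)^{-\alpha}$, which is absorbed into the first term of the claim.

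The main obstacle is verifying the hypothesis of Trojan's theorem at the smaller scale $L$: it requires $(\log L)^{\beta_0} \leq q \leq L^{|\gamma_0|}(\log L)^{-\beta_0}$, where $\beta_0 = \beta_0(\alpha)$ is the threshold in \cite[Theorem 2]{Troj} producing the gain $(\log L)^{-\alpha}$. Since $L^{|\gamma_0|} = N^{|\gamma_0|}(\log N)^{-\alpha|\gamma_0|}$ and $\log L \sim \log N$, the assumed range $(\log N)^\beta \leq q \leq N^{|\gamma_0|}(\log N)^{-\beta}$ is contained in the required range precisely when $\beta \geq \beta_0 + \alpha|\gamma_0|$. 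Setting $\beta_\alpha := \beta_0(\alpha) + \alpha \cdot \deg \calQ$ therefore accommodates every $\gamma_0 \in \Gamma$, and combining the two estimates above yields the proposition.
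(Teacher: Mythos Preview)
There is a genuine gap in the interior-cube step. You claim that Trojan's Weyl inequality ``applied at scale $L$'' to an interior cube $Q_j$ yields $|S_j| \lesssim L^k(\log L)^{-\alpha}$. But Trojan's \cite[Theorem~2]{Troj} is stated for convex sets contained in $B(0,N)$, with both the Diophantine hypothesis and the conclusion phrased in terms of that ambient radius $N$; it does not give a bound in terms of the diameter of the set. A cube $Q_j$ of side $L$ sitting at distance $\sim N$ from the origin lies in $B(0,N)$, not in $B(0,CL)$, so the theorem only delivers $|S_j|\lesssim N^k(\log N)^{-\alpha'}$. You cannot rescue this by translating $Q_j$ to the origin: the summation domain in the $(\pm\PP)^{k''}$ variables is not translation-invariant, and even in the $\ZZ^{k'}$ variables a shift alters every $\xi_\gamma$ except those of top degree, so the Diophantine condition on a general $\xi_{\gamma_0}$ need not survive. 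Your computation of $\beta_\alpha$ presupposes exactly this unavailable translation.

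The paper's argument circumvents the issue by applying Trojan's inequality \emph{at scale $N$} to each convex set $\Omega\cap Q_j\subseteq B(0,N)$, obtaining $N^k(\log N)^{-\alpha'}$ per cube for an \emph{arbitrary} $\alpha'>0$; the loss of a factor $J\lesssim(\log N)^{k\alpha}$ from summing over cubes is then absorbed by choosing $\alpha'>(k+1)\alpha$, and this is what determines $\beta_\alpha$. The paper also handles the non-convexity of $(\Omega\setminus\Omega')\cap Q_j$ differently from your interior/boundary split: it works with the convex $\Omega\cap Q_j$ in the main term and then removes the $\Omega'\cap Q_j$ contribution by a second application of Trojan's inequality to that (again convex) set.
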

\begin{proof}
We define $\Tilde{\phi}(n,p,A):=\phi(n,p)\mathds{1}_{(\Omega\setminus\Omega')\cap A}(n,p)$.  We partition the cube $[-N,N]^k$ into $J\lesssim\log(N)^{k\alpha}$ cubes $Q_j$ with disjoint interiors and side lengths $C N\log(N)^{-\alpha}$ for some constant $C>0$. Let $(m_j,p_j)$ be a fixed element of $Q_j\cap\Omega\setminus\Omega'$. Since $\mathds{1}_{\Omega}(x)\mathds{1}_{\Omega\setminus\Omega'}(x)=\mathds{1}_{\Omega\setminus\Omega'}(x)$ for any $x\in\RR^k$,  we have
\begin{align}\label{weylone}
    \abs[\Big]{\sum_{(n,p)}\ex(\xi\cdot\calQ(n,p))\Tilde{\phi}(n,p,\Omega)\mathds{1}_{\Omega}(n,p)}\lesssim\sum_{j=1}^J\abs[\Big]{\sum_{(n,p)}\ex(\xi\cdot\calQ(n,p))\Tilde{\phi}(n,p,Q_j)\mathds{1}_{\Omega\cap Q_j}(n,p)},
\end{align}
where all sums are taken over $(n,p)\in\ZZ^{k'}\times(\pm\PP)^{k''}$. Let $(m_j,p_j)$ be a fixed element of $Q_j\cap\Omega\setminus\Omega'$. We estimate the right hand side of \eqref{weylone} by
\begin{align*}
   \sum_{j=1}^J\abs[\Big]{\sum_{(n,p)}\ex(\xi\cdot\calQ(n,p))\Tilde{\phi}(m_j,p_j,Q_j)&\mathds{1}_{\Omega\cap Q_j}(n,p)}\\+
   &\sum_{j=1}^J\abs[\Big]{\sum_{(n,p)}\ex(\xi\cdot\calQ(n,p))\big(\Tilde{\phi}(m_j,p_j,Q_j)-\Tilde{\phi}(n,p,Q_j)\big)\mathds{1}_{\Omega\cap Q_j}(n,p)}.
\end{align*}
By Trojan's variant of Weyl's inequality \cite[Theorem 2]{Troj},
the first term is bounded by
\begin{equation}\label{eq:weyl:new}
    J N^k(\log N)^{-\alpha'}\norm{\phi}_{L^\infty(\Omega\setminus\Omega')}\lesssim N^k(\log N)^{k\alpha-\alpha'}\norm{\phi}_{L^\infty(\Omega\setminus\Omega')}
\end{equation}
for any $\alpha'>0$. Since $\mathds{1}_{\Omega\cap Q_j}(n,p)=\mathds{1}_{\Omega'\cap Q_j}(n,p)+\mathds{1}_{(\Omega\setminus\Omega')\cap Q_j}(n,p)$, the second term is bounded by
\begin{equation}\label{eq:weyl:new2}
    N^k(\log N)^{k\alpha-\alpha'}\norm{\phi}_{L^\infty(\Omega\setminus\Omega')}+N^k\sup_{\substack{\abs{x-y}\leq N(\log N)^{-\alpha}\\x,y\in\Omega\setminus\Omega'}} \abs{\phi(x)-\phi(y)}.
\end{equation}
Choosing an appropriate $\alpha'>0$ in \eqref{eq:weyl:new} and \eqref{eq:weyl:new2} yields the claim.
\end{proof}
The next result is a generalization of \cite[Proposition 4.1]{Troj} and \cite[Proposition 4.2]{Troj} in the spirit of \cite[Proposition 4.18]{MSZ3}. For $q\in\NN$ and $a\in\NN_q^\Gamma$ with ${\rm gcd}(a,q)=1$, the \emph{Gaussian sum} related to the polynomial mapping $\calQ$ is given by
\begin{equation}
G(a/q) := \frac{1}{q^{k'}} \frac{1}{\varphi(q)^{k''}} \sum_{x \in \NN^{k'}_q} \sum_{y \in A_q^{k''}} 
	\ex((a/q)\cdot\calQ(x, y)),
\end{equation}
where $A_q:=\{a\in\NN_q:{\rm gdc}(a,q)=1\}$ and $\varphi$ is Euler's totient function. There is $\delta > 0$ such that
\begin{equation}\label{gausssumest}
    \big|G(a/q) \big| \lesssim q^{-\delta},
\end{equation}
according to \cite[Theorem 3]{Troj}.

\begin{lemma}\label{approxlemma}
Let $N\in\NN$ and let $\Omega\subseteq B(0,N)\subset\RR^k$ be a convex set or a Boolean combination
of finitely many convex sets. Let $\calK\colon\RR^k\to\CC$ be a continuous function supported in $\Omega$. Then, for each $\beta>0$, there is a constant $c = c_{\beta}>0$ such that, for any $q \in \NN$ with $1 \leq q \leq (\log N)^{\beta}$, $a \in A_q$, and $\xi = a/q + \theta \in \RR^\Gamma$, we have
\begin{align*}
\bigg|\sum_{(n,p)\in\ZZ^{k'}\times(\pm\PP)^{k''}}\ex\big(\xi\cdot\calQ(n,p)\big)\calK(n,p)\Big(\prod_{i=1}^{k''}\log|p_i|\Big)-G(a/q)\int_{\Omega}\ex\big((\xi-a/q)\cdot\calQ(t)\big)\calK(t){\rm d }t\bigg|
\\ \lesssim \big[N^{k-1}\|\calK\|_{L^{\infty}(\Omega)} \big(1 
		+
		\sum_{\gamma \in \Gamma} |\theta_\gamma| N^{|\gamma|} \big)
 + N^k\sup_{\substack{x,y \in \Omega \\ |x-y| \leq q\sqrt{k}}} |\calK(x) - \calK(y)|\big]N\exp\big(-c\sqrt{\log N}\big). 
\end{align*}
The implied constant is independent of $N,a,q,\xi$ and the kernel $\calK$.
\end{lemma}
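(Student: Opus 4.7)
The plan follows the standard Hardy--Littlewood circle-method strategy in the kernel-weighted setting: split the sum into residue classes modulo $q$, apply Siegel--Walfisz (combined with Abel summation) to remove the logarithmic prime weights, and then compare the remaining lattice sum to the integral via a Riemann-sum argument. This is a unified refinement of \cite[Propositions 4.1--4.2]{Troj} and of \cite[Proposition 4.18]{MSZ3}, adapted to the general convex $\Omega$ and to the merely continuous kernel $\calK$.

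First I would split the sum by residues modulo $q$. Writing $(n,p)=(qr+n_{0},q\ell+s)$ with $(n_{0},s)\in\NN_{q}^{k'}\times\NN_{q}^{k''}$ and using that every monomial $x^{\gamma}$ is preserved modulo $q$, the phase factorizes as $\ex((a/q)\cdot\calQ(n,p))=\ex((a/q)\cdot\calQ(n_{0},s))$. Primes $p_{i}$ dividing $q$ contribute only $O(\log q)$ each and are absorbed into the final error since $q\leq(\log N)^{\beta}$. Setting $F:=\ex(\theta\cdot\calQ)\calK$, it then suffices to show that for each $(n_{0},s)\in\NN_{q}^{k'}\times A_{q}^{k''}$ the inner sum over $(r,\ell)$ equals $(q^{k'}\varphi(q)^{k''})^{-1}\int_{\Omega}F(t)\,\mathrm{d}t$ up to the claimed error; summing these over residues reconstructs exactly $G(a/q)\int_{\Omega}F$.

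For the prime coordinates, I would iterate Siegel--Walfisz coordinate by coordinate via Abel summation. In each coordinate, for $q\leq(\log T)^{\beta}$ and $s\in A_{q}$, Siegel's theorem furnishes
\[
\sum_{\substack{p\equiv s\,(q)\\ |p|\leq T}}g(p)\log|p| \;=\; \frac{1}{\varphi(q)}\int_{|t|\leq T}g(t)\,\mathrm{d}t \;+\; O\big(T\|g\|_{\infty}\exp(-c\sqrt{\log T})\big) \;+\; O\big(T\,\omega_{g}(q)\big),
\]
where $\omega_{g}(q)$ denotes the modulus of continuity on scale $q$; iterating over the $k''$ prime coordinates replaces the prime weights by Lebesgue measure times $\varphi(q)^{-k''}$, with total error of the stated exponential type. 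For the $k'$ non-prime coordinates, the standard Riemann-sum comparison on the sublattice $qr+n_{0}$ produces the $N^{k-1}\|F\|_{\infty}$ piece from the $O((N/q)^{k-1})$ lattice cells of side $q$ that meet $\partial\Omega$, plus $N^{k}\sup_{|x-y|\leq q\sqrt{k}}|F(x)-F(y)|$ from the cellwise variation of $F$.

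Finally, I would control the modulus of continuity of $F=\ex(\theta\cdot\calQ)\calK$ on scale $q\sqrt{k}$. For the smooth exponential factor, the mean value theorem gives
\[
|\ex(\theta\cdot\calQ(x))-\ex(\theta\cdot\calQ(y))|\;\lesssim\;|x-y|\sum_{\gamma\in\Gamma}|\theta_{\gamma}|\,N^{|\gamma|-1}\qquad(x,y\in B(0,N)),
\]
which combined with $|x-y|\leq q\sqrt{k}\leq\sqrt{k}(\log N)^{\beta}$ yields the prefactor $1+\sum_{\gamma}|\theta_{\gamma}|N^{|\gamma|}$ up to a polylogarithmic loss, easily absorbed by slightly decreasing $c$ in $\exp(-c\sqrt{\log N})$. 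The $\calK$-piece of $\omega_{F}$ is exactly the sup appearing in the statement. The main obstacle is the coordinatewise iteration of Siegel--Walfisz against the kernel $\calK$, which is only continuous: each application must route the roughness of $\calK$ into the remainder rather than into a gradient-type quantity, which is why a modulus-of-continuity term on scale $q\sqrt{k}$ (rather than $\|\nabla\calK\|_{\infty}$) appears in the final estimate. Collecting the Siegel--Walfisz and Riemann-sum errors and summing over the $q^{k'}\varphi(q)^{k''}\leq(\log N)^{k\beta}$ residue classes produces the two claimed contributions.
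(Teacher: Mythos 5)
Your proposal is correct and follows essentially the same route as the paper's proof: decomposition into residue classes modulo $q$ with the primes dividing $q$ absorbed as an $O(N^{k-1}\|\calK\|_{\infty}\log\log N)$ error, coordinatewise Abel summation combined with Siegel--Walfisz to remove the prime weights, a Riemann-sum comparison producing the boundary lattice-point term and the modulus-of-continuity term at scale $q\sqrt{k}$, and absorption of the $(\log N)^{k\beta}$ loss from summing over residues by shrinking $c$. The only omitted (routine) details are the extension to the $2^k$ sign combinations in $(\pm\NN_0)^{k'}\times(\pm\PP)^{k''}$ and the harmless adjustment of the integration domain near the coordinate hyperplanes.
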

\begin{proof}
The case when $k=k'$ was proven in \cite[Proposition 4.18]{MSZ3}, so we assume that $k>k'$. Observe that, for a prime number $p$, $p \mid q$ if and only if $(p \bmod q, q) > 1$. Hence, for each 
	$s \in \{1, \ldots, k''\}$, we have
	\begin{equation}
        \begin{aligned}
        \label{eq:coprime}
		&\bigg|
		\sum_{n \in \NN_0^{k'}}
		\sum_{\substack{r'' \in \NN_q^{k''} \\ (r_s'', q) > 1}}
		\sum_{\substack{p \in \PP^{k''} \\ p \equiv r'' \bmod q}}
		\ex(\xi \cdot \calQ(n, p)) \calK(n, p)
		\bigg(\prod_{j=1}^{k''} \log p_j\bigg)
		\bigg|
		\\ &\lesssim
		N^{k-1} \|\calK\|_{L^{\infty}(\Omega)}
		\sum_{p \mid q} \log p
		\lesssim
		N^{k-1} \|\calK\|_{L^{\infty}(\Omega)}\log q
            \lesssim N^{k-1} \|\calK\|_{L^{\infty}(\Omega)}\log \log(N).
        \end{aligned}
	\end{equation} 
To simplify the notation, for $(x,y) \in \RR^k \setminus \{0\}$, we set
 $F(x,y):= \ex(\theta \cdot \calQ(x, y))\calK(x, y).$ For $(n, p) \in \NN^{k'} \times \PP^{k''}$ with $n \equiv r' \bmod q$ and 
	$p \equiv r'' \bmod q$, we have
	\[
		\begin{aligned}
		\xi_\gamma n^{\gamma'} p^{\gamma''} \equiv \frac{a_\gamma}{q} n^{\gamma'} p^{\gamma''} 
		+ \theta_\gamma n^{\gamma'} p^{\gamma''} 
		\equiv \frac{a_\gamma}{q} (r')^{\gamma'} (r'')^{\gamma''} + \theta_\gamma n^{\gamma'} p^{\gamma''} \pmod 1.
		\end{aligned}
	\]
	Therefore, we have $\ex(\xi \cdot \calQ(n, p)) = \ex((a/q) \cdot \calQ(r', r''))\ex(\theta \cdot \calQ(n, p)),$ so then
	\begin{equation}
        \begin{aligned}
        \label{eq:conjclass}
		\sum_{n \in \NN_0^{k'}} \sum_{p \in \PP^{k''}} \ex(\xi \cdot \calQ(n, p)) &\calK(n, p)
		\bigg(\prod_{j = 1}^{k''} \log p_j\bigg)
		\\
		&=
		\sum_{r' \in \NN^{k'}_q} 
		\sum_{r'' \in A_q^{k''}}
		\ex((a/q) \cdot \calQ(r', r'')) 
		\sum_{\substack{n \in \NN_0^{k'} \\ n \equiv r' \bmod q}}
		\sum_{\substack{p \in \PP^{k''} \\ p \equiv r'' \bmod q}}
		F(n,p)
		\bigg(\prod_{j = 1}^{k''} \log p_j\bigg)
		\\ & +
		\calO\Big(N^{k-1}\|\calK\|_{L^{\infty}(\Omega)} \log \log N\Big),
            \end{aligned}
		\end{equation}
 where the error term is the cost for making the summation for $r''$ over $A_q^{k''}$ instead of $N_q^{k''}$. 
	Fix $u \in \NN^{k'}$, $\tilde{p} \in \PP^{k''-1}$, and $r_1'' \in A_q$. Then
		$\left\{v \in \NN : (u, v, \tilde{p}) \in \Omega \right\} = \left(V_0+1, \ldots, V_1\right)$
	for some $0 \leq V_0 \leq V_1 \leq N$. 
	By partial summation, we obtain
	\begin{equation}
		\label{sumparts1}
		\begin{aligned}
		&\sum_{\substack{p_1 \in \PP_{V_1} \setminus \PP_{V_0} \\ p_1 \equiv r_1'' \bmod q}}
        F(u,p_1, \tilde{p}) \log p_1 
		=
		\sum_{\substack{v_1 = V_0+1 \\ v_1 \equiv r_1'' \bmod q}}^{V_1}
		F(u,v_1, \tilde{p}) 
		\ind{\PP}(v_1) \log v_1 \\
		&=
		\vartheta(V_1; q, r''_1) F(u,V_1,\tilde{p})
		-
		\vartheta(V_0; q, r''_1) F(u,V_0+1,\tilde{p}) 
		- \sum_{v_1 = V_0+1}^{V_1-1} \vartheta(v_1; q, r_1'') 
		[F(u,v_1+1,\tilde{p}) - F(u,v_1,\tilde{p})] ,
		\end{aligned}
	\end{equation}
	where, for $x \geq 1$, we have set
	\[
		\vartheta(x; q, r) := \sum_{\substack{p \in \PP_x \\ p \equiv r \bmod q}} \log p.
	\]
	Similarly, we have
	\begin{equation}
		\label{sumparts2}
		\sum_{v_1 = V_0 + 1}^{V_1}
        F(u,v_1, \tilde{p})
		=
		V_1 F(u,V_1,\tilde{p})
		-
		V_0 F(u,V_0+1,\tilde{p}) 
		- \sum_{v_1 = V_0+1}^{V_1-1} v_1 
		[F(u,v_1+1,\tilde{p}) - F(u,v_1,\tilde{p})].
	\end{equation}
	Furthermore, in view of the Siegel--Walfisz theorem (\cite{Sieg, Wal}, see also \cite[Corollary 11.21]{MV}), 
	there are $C, c' > 0$ such that for all $x \geq 1$, $(r, q) = 1$ and $1 \leq q \leq (\log x)^{\beta'}$,
	\begin{equation}
		\label{Sieg-Wal}
		\bigg|\vartheta(x; q, r) - \frac{x}{\varphi(q)} \bigg| \leq C x \exp\big(-c' \sqrt{\log x}\big).
	\end{equation}
	Hence, by \eqref{sumparts1}, \eqref{sumparts2}, and \eqref{Sieg-Wal}, we obtain
	\begin{align*}
		&\bigg| \sum_{\substack{p_1 \in \PP_{V_1}\setminus\PP_{V_0} \\ p_1 \equiv r_1'' \bmod q}}
        F(u, p_1, \tilde{p}) \log p_1 -
		\frac{1}{\varphi(q)}
		\sum_{v_1 = V_0 +1}^{V_1} F(u, v_1, \tilde{p})
		\bigg| 
		 \\ & \qquad \lesssim
		\|\calK\|_{L^{\infty} (\Omega)} \bigg|\vartheta(V_1; q, r''_1) - \frac{V_1}{\varphi(q)}\bigg|
		+ 
		\|\calK\|_{L^{\infty} (\Omega)} \bigg|\vartheta(V_0; q, r''_1) - \frac{V_0}{\varphi(q)}\bigg| 
		\\ & \qquad \phantom{\lesssim} +
		\bigg[\|\calK\|_{L^\infty(\Omega)}\sum_{\gamma \in \Gamma} |\theta_\gamma| N^{|\gamma|-1} 
 + \sup_{\substack{x,y \in \Omega \\ |x-y| \leq 1}} |\calK(x) - \calK(y)|\bigg] 
		\sum_{v_1 = V_0+1}^{V_1-1}
		\bigg|\vartheta(v_1; q, r''_1) - \frac{v_1}{\varphi(q)} \bigg|  \\
		& \qquad \lesssim
		\big[\|\calK\|_{L^{\infty}(\Omega)} \big(1 
		+
		\sum_{\gamma \in \Gamma} |\theta_\gamma| N^{|\gamma|} \big)
 + N\sup_{\substack{x,y \in \Omega \\ |x-y| \leq 1}} |\calK(x) - \calK(y)|\big] 
		N\exp\big(-c'\sqrt{\log N}\big).
	\end{align*}
Similar arguments applied to the sums over $p_2, \ldots, p_{k''}$ give 
	\begin{equation}
        \begin{aligned}
        \label{eq:recursion}
		\bigg| \sum_{\substack{u \in \NN_0^{k'} \\ u \equiv r' \bmod q}}
		&\sum_{\substack{p \in \PP^{k''} \\ p \equiv r'' \bmod q}}
		F(u,p)
		\bigg(\prod_{j = 1}^{k''} \log p_j \bigg) -
		\frac{1}{\varphi(q)^{k''}}
		\sum_{u \in \NN_0^{k'}}
		\sum_{v \in \NN^{k''}}
		F(qu+r',v) \bigg|
		\\ & \lesssim
		\big[N^{k-1}\|\calK\|_{L^{\infty}(\Omega)} \big(1 
		+
		\sum_{\gamma \in \Gamma} |\theta_\gamma| N^{|\gamma|} \big)
 + N^k\sup_{\substack{x,y \in \Omega \\ |x-y| \leq 1}} |\calK(x) - \calK(y)|\big]N\exp\big(-c'\sqrt{\log N}\big). 
        \end{aligned}
	\end{equation}
Let $\Omega_+:= \Omega \cap [0,\infty)^{k'} \times [1,\infty)^{k''}$. We can estimate the sum by an integral by writing
\begin{equation}
\begin{aligned}
\label{eq:sum int}
&\bigg| q^{k'}\sum_{u \in \NN_0^{k'}}\sum_{v \in \NN^{k''}} F(qu+r',v) - \iint_{\Omega_+} F(s,t) \textrm{d}s \textrm{d}t \bigg| 
\\ &= \bigg| q^{k'}\sum_{u \in \NN_0^{k'}}\sum_{v \in \NN^{k''}} F(qu+r',v) - \sum_{u \in \NN_0^{k'}}\sum_{v \in \NN^{k''}} \int_{qu+[0,q)^{k'}}\int_{v+[0,1)^{k''}} F(s,t) \textrm{d}s \textrm{d}t \bigg| 
\\ &\leq \sum_{u \in \NN_0^{k'}}\sum_{v \in \NN^{k''}} \int_{[0,q)^{k'}}\int_{[0,1)^{k''}} |F(qu+r',v) - F(qu+s,v+t)| \textrm{d}s \textrm{d}t.
\end{aligned}
\end{equation}
We use three estimates to control this:
\begin{align*}
    |\ex(\theta \cdot \calQ(qu+r',v)) - \ex(\theta \cdot \calQ(qu+s,v+t))| &\lesssim \sum_{\gamma \in \Gamma} q|\theta_\gamma|N^{|\gamma|-1},\\
    |\calK(qu+r',v)-\calK(qu+s,v+t)| &\lesssim \sup_{\substack{x,y\in\Omega \\ |x-y|\leq q\sqrt{k}}} |\calK(x) - \calK(y)|,\\
    \sum_{u \in \NN_0^{k'}} \sum_{v \in \NN^{k''}} |\ind{\Omega}(qu+r',v) - \ind{\Omega}(qu+s,v+t)| &\lesssim (N/q)^{k-1},
\end{align*}
where the last inequality is a consequence of \cite[Proposition 4.16]{MSZ3}, which gives that the number of lattice points in $\Omega$ at a distance $<q$ from the boundary of $\Omega$ is $\calO(q N^{k-1})$. We therefore get a bound for \eqref{eq:sum int} of the form
\begin{align*}
\calO\bigg(qN^{k-1}\|\calK\|_{L^{\infty}(\Omega)}\bigg[1 +  \sum_{\gamma \in \Gamma} |\theta_\gamma|N^{|\gamma|} \bigg] + N^k\sup_{\substack{x,y\in\Omega \\ |x-y|\leq q\sqrt{k}}} |\calK(x) - \calK(y)|\bigg).
\end{align*}
Applying this in \eqref{eq:recursion} and combining the error terms appropriately gives
\begin{align*}
\bigg| \sum_{\substack{u \in \NN_0^{k'} \\ u \equiv r' \bmod q}}
		&\sum_{\substack{p \in \PP^{k''} \\ p \equiv r'' \bmod q}}
		F(u,p)
		\bigg(\prod_{j = 1}^{k''} \log p_j \bigg) -
		\frac{1}{q^{k'}}
            \frac{1}{\varphi(q)^{k''}}
		\iint_{\Omega_+} F(s,t) \textrm{d}s \textrm{d}t  \bigg|
		\\ & \lesssim
		\big[N^{k-1}\|\calK\|_{L^{\infty}(\Omega)} \big(1 
		+
		\sum_{\gamma \in \Gamma} |\theta_\gamma| N^{|\gamma|} \big)
 + N^k\sup_{\substack{x,y \in \Omega \\ |x-y| \leq q\sqrt{k}}} |\calK(x) - \calK(y)|\big]N\exp\big(-c'\sqrt{\log N}\big).
\end{align*}
Applying this in \eqref{eq:conjclass} by summing in $r'$ and $r''$ together with \eqref{eq:coprime} gives
\begin{equation}\label{approx:final}
\begin{aligned}
\bigg|\sum_{(n,p)\in\NN_0^{k'}\times\PP^{k''}}\ex\big(\xi\cdot\calQ(n,p)\big)\calK(n,p)\mathds{1}_{\Omega}(n,p)\Big(\prod_{i=1}^{k''}\log|p_i|\Big)-G(a/q)\int_{\Omega_+}\ex\big((\xi-a/q)\cdot\calQ(t)\big)\calK(t){\rm d }t\bigg|
\\ \lesssim \big[N^{k-1}\|\calK\|_{L^{\infty}(\Omega)} \big(1 
		+
		\sum_{\gamma \in \Gamma} |\theta_\gamma| N^{|\gamma|} \big)
 + N^k\sup_{\substack{x,y \in \Omega \\ |x-y| \leq q\sqrt{k}}} |\calK(x) - \calK(y)|\big]N\exp\big(-c\sqrt{\log N}\big)
\end{aligned}
\end{equation}
for any $c < c'$. In simplifying to get the error term above, note that 
$q^{k'} \phi(q)^{k''} \leq q^{k} \leq (\log N)^{\beta k}$
and
\[(\log N)^{\beta k}\exp(-c'\sqrt{\log N}) \lesssim \exp(-c\sqrt{\log N}).\]
Finally, we note that we can increase the range of integration at \eqref{approx:final} to the larger $\Omega \cap [0,\infty)^{k}$ by noting that
\begin{equation*}
G(a/q)\int_{\Omega\cap[0,\infty]^{k'}\times[0,1)^{k''}}\ex\big((\xi-a/q)\cdot\calQ(t)\big)\calK(t){\rm d}t
\end{equation*}
is bounded by $N^{k'}\|\calK\|_{L^\infty(\Omega)}\leq N^{k-1}\|\calK\|_{L^\infty(\Omega)}$.

We can repeat the entire proof replacing $\NN_0$ with $-\NN_0$ and/or $\PP$ with $-\PP$ in all the $2^k$ many possible combinations thereof in $\NN_0^{k'} \times \PP^{k''}$. Then, collecting all of the error terms yields the claim.    
\end{proof}

\subsection{Multipliers for the averaging operators}
For a function $f\colon\ZZ^\Gamma \rightarrow \CC$ with finite support, we have
\[A_t^{\calQ,k',k''}f(x) = T_{\ZZ^\Gamma}[\frkM_{t}]f(x) \quad\text{and}\quad H_t^{\calQ,k',k''}f(x) = T_{\ZZ^\Gamma}[\mathfrak{n}_{t}]f(x)\]
for the discrete Fourier multipliers
\begin{equation*}
\frakm_{t}(\xi):=
\frac{1}{\vartheta_\Omega(t)}\sum_{(n,p)\in\ZZ^{k'}\times(\pm\PP)^{k''}}\ex\big(\xi\cdot\calQ(n,p)\big)\mathds{1}_{\Omega_t}(n,p)\Big(\prod_{i=1}^{k''}\log|p_i|\Big),\quad \xi\in\TT^\Gamma,
\end{equation*}
and
\begin{equation*}
\mathfrak{n}_t(\xi):=
\sum_{(n,p)\in\ZZ^{k'}\times(\pm\PP)^{k''}}\ex(\xi\cdot\calQ(n,p))K(n,p)\mathds{1}_{\Omega_t}(n,p)\Big(\prod_{i=1}^{k''}\log|p_i|\Big),\quad \xi\in\TT^\Gamma.
\end{equation*}
Their continuous counterparts are given by
\begin{equation*}
    \Phi_t(\xi):=\frac{1}{|\Omega_t|}\int_{\Omega_t}\ex(\xi\cdot\calQ(t)){\rm d} t\quad\text{and}\quad \Psi_t(\xi):={\rm p.v.}\int_{\Omega_t}\ex(\xi\cdot\calQ(t))K(t){\rm d} t
\end{equation*}
respectively. To present a unified approach, we write $M_t^{k',k''}$, $\frkY_t$, and $\Theta_t$ to represent either $A_t^{\calQ,k',k''}$, $\frkM_t$, and $\Phi_t$ or $H_t^{\calQ,k',k''}$, $\frkN_t$, and $\Psi_t$ respectively. 
 We now present the key properties of our multiplier operators that will be used in the proof of Theorem~\ref{maintheorem2}. Let $N_n:=\lfloor 2^{n^\tau} \rfloor$ for $n\in\NN$ and some $\tau\in(0,1]$ adjusted later.

\begin{enumerate}[label={\bf Property \arabic*.}, ref=\arabic*, itemindent=*,leftmargin=0pt]
\item\label{pr:1} For each $\alpha > 0$, there is $\beta_{\alpha} > 0$ such that, for any
	$\beta > \beta_{\alpha}$ and $n \in \NN$, if there is a multi-index $\gamma_0 \in \Gamma$ with
	\[
    	\bigg|\xi_{\gamma_0} - \frac{a}{q} \bigg| \leq \frac{1}{q^2}
	\]
	for some coprime integers $a$ and $q$ with $1 \leq a \leq q$ and
	$(\log N_n)^\beta \leq q \leq N_n^{\abs{\gamma_0}} (\log N_n)^{-\beta}$, then
	\[
		|(\frkY_{N_n} - \frkY_{N_{n-1}})(\xi)| \lesssim C(\log N_n)^{-\alpha}.
	\]
 This follows from Proposition~\ref{weylinq} with $\phi(x)\equiv(\vartheta_\Omega(N_n))^{-1}$ for the $\frkY_t = \frkM_{t}$ case and with $\phi(x)=K(x)$ for the $\frkY_t = \frkN_{t}$ case, noting the size condition \eqref{eq:size-unif} and the continuity condition \eqref{eq:K-modulus-cont}.
\item\label{pr:2}
Let $A$ be the $|\Gamma| \times |\Gamma|$ diagonal matrix with
\begin{equation}
	\label{matrixA}
	(A v)_\gamma = \abs{\gamma} v_\gamma.
\end{equation}
For any $t>0$, we set $t^A v: = \big(t^{\abs{\gamma}} v_\gamma : \gamma \in \Gamma\big).$ Then
\begin{equation*}
\big|\Theta_{N_n}(\xi) - \Theta_{N_{n-1}}(\xi)\big|
		\lesssim
		\min\big\{|N_n^A\xi|_\infty, |N_n^A \xi|_\infty^{-1/|\Gamma|}\big\},\quad\text{for each }n\in\NN.
\end{equation*}
In the $\Theta_t = \Phi_t$ case, this follows from the mean value theorem and the standard van der Corput lemma. In the $\Theta_t = \Psi_t$ case, this follows from the cancellation condition \eqref{eq:cancel} and \cite[Proposition B.2]{MSZ2} (see \cite[p. 21]{MSZ2} for details).

\item\label{pr:3}
For each $\alpha > 0$,  $n \in \NN$, and $\xi \in \TT^\Gamma$ satisfying
\[
    \bigg|\xi_\gamma - \frac{a_\gamma}{q} \bigg| \leq N_n^{-\abs{\gamma}} L\qquad\text{for all }\gamma \in \Gamma
\]
with $1 \leq q \leq L$, $a\in A_q^\Gamma$, and $1 \leq L \leq \exp\big(c\sqrt{\log{N_n}}\big) (\log N_n)^{-\alpha}$, we have
\[
        \frkY_{N_n}(\xi) - \frkY_{N_{n-1}}(\xi) =
        G(a/q) 
		\big(\Theta_{N_n}(\xi - a/q) - \Theta_{N_{n-1}}(\xi - a/q)\big) 
		+ \mathcal{O}\big((\log N_n)^{-\alpha}\big),
\]
for some constant $c>0$ which is independent of $n, \xi, a$ and $q$. 

In the $\frkY_t = \frkM_t$, $\Theta_t = \Phi_t$ case, this is \cite[Property 6]{Troj}. In the $\frkY_t = \frkN_t$, $\Theta_t = \Psi_t$ case, this follows from Property~\ref{pr:1} alongside Lemma~\ref{approxlemma} with $\Omega:=\Omega_{N_n}\setminus\Omega_{N_{n-1}}$ and $\calK(n,p):=K(n,p)\ind{\Omega}$, noting the size condition \eqref{eq:size-unif} and the continuity condition \eqref{eq:K-modulus-cont}. For details see \cite[Lemmas 3 and 5]{Troj}.
\end{enumerate}

\subsection{Parameters discussion}
Let $p\in(1,\infty)$ be fixed and let $\chi\in(0,1/10)$. Fix $\tau$ with $0 < \tau < 1-\min(2,p)^{-1}$ and let $N_n:=\lfloor 2^{n^\tau} \rfloor$ for $n\in\NN$. If $p \in (1,2)$, fix $p_0$ such that $1 < p_0 < p$. If instead $p \in (2,\infty)$, fix $p_0 > p$. If $p=2$, the discussion is moot since all the interpolation arguments in the article become unnecessary. We choose $\rho$ with 
\[\rho > \frac{1}{\tau}\frac{pp_0-2p}{2p_0 - 2p}\]
so that interpolation of the estimates
\[\|T\|_{\ell^2} \lesssim n^{-\rho \tau} \quad \text{and} \quad \|T\|_{\ell^{p_0}}\lesssim 1 \]
yields 
\[\|T\|_{\ell^p} \lesssim n^{-(1+\varepsilon)} \text{ for some } \varepsilon > 0.\]
Property~\ref{pr:1} gives us a corresponding $\beta_\rho$. We fix a choice of $\beta > \beta_\rho$ and then fix a choice of $u\in\NN$ with $u>|\Gamma|\beta$. We also have the value of $\delta$ coming from the Gaussian sum estimate \eqref{gausssumest}. With these fixed, we choose the value of $\varrho$ in Theorem~\ref{IW} to be
\[\varrho:= \min\bigg(\frac{\chi}{10 u}, \frac{\delta}{8\tau}\bigg).\]
\section{Proof of Theorem~\ref{maintheorem2}}
By the monotone convergence theorem and standard density arguments it is enough to prove that
\[
\calS_{\ZZ^\Gamma}^p(M_t^{k',k''}f : t\in\II) \lesssim_{p,k,|\Gamma|} \|f\|_{\ell^p(\ZZ^\Gamma)}
\]
holds for every finite subset $\II\subset\RR_+$ with the implicit constant independent of the set $\II$. We start by splitting (cf. \cite[Lemma 1.3]{jsw}) into long oscillations/jumps and short variations along the subexponential sequence $N_n$: 
\[
\calS_{\ZZ^\Gamma}^p\big(M_t^{k',k''}f : t\in\II\big)\lesssim \calS_{\ZZ^\Gamma}^p(T_{\ZZ^\Gamma}[\frkY_{N_n}]f : n \in \NN_0) + 
\bigg\| \Big(\sum_{n\in\NN_0} V^2\big(M_t^{k',k''}f : t \in [N_{n}, N_{n+1})\cap\II\big)^2 \Big)^{1/2} \bigg\|_{\ell^p(\ZZ^\Gamma)}.
\]
\subsection{Short variations}
By using the arguments from \cite[Section 3.1]{MSZ3}, the estimate for the short variations will follow from the estimate
\begin{equation}\label{eq:short1}
\big\|V^1(M_t^{k',k''}f:t\in [N_{n}, N_{n+1})\cap\II)\big\|_{\ell^1(\ZZ^\Gamma)}\lesssim n^{\tau-1}\|f\|_{\ell^1(\ZZ^\Gamma)}.
\end{equation}
Let $t_1<t_2<\cdots<t_{J(n)}$ be a sequence of elements of $[N_n,N_{n+1})\cap\II$. Since the number of elements in $[N_n,N_{n+1})\cap\II$ is finite, it is easy to see that
\begin{equation*}
\big\|V^1(M_t^{k',k''}f:t\in [N_{n}, N_{n+1})\cap\II)\big\|_{\ell^1(\ZZ^\Gamma)}\leq\Big\|\sum_{j=1}^{J(n)}\big|M_{t_j}^{k',k''}f-M_{t_{j-1}}^{k',k''}f\big|\Big\|_{\ell^1(\ZZ^\Gamma)}
\end{equation*}
for any $n\in\NN_0.$ Moreover, we have 
\begin{equation}\label{eq:short2}
\Big\|\sum_{j=1}^{J(n)}\big|M_{t_j}^{k',k''}f-M_{t_{j-1}}^{k',k''}f\big|\Big\|_{\ell^1(\ZZ^\Gamma)}\lesssim 2^{-kn^\tau}\big(\vartheta_\Omega(N_{n+1})-\vartheta_\Omega(N_n)\big)\|f\|_{\ell^1(\ZZ^\Gamma)}.
\end{equation}
This follows from the monotonicity of the sets $\Omega_t$
and having $\vartheta_\Omega(t)\approx t^k$ by the prime number theorem in the $M_t^{k',k''} = A_t^{\calQ, k',k''}$ case or the size condition \eqref{eq:size-unif} in the $M_t^{k',k''} = H_t^{\calQ, k',k''}$ case. By \cite[Eq. 4.10]{Troj}, the right hand side of \eqref{eq:short2} is bounded by $n^{\tau-1}\|f\|_{\ell^1(\ZZ^\Gamma)}$, proving \eqref{eq:short1}.
\subsection{Long oscillations/jumps and the circle method}
Let $\eta\colon\RR^\Gamma \rightarrow [0,1]$  be a smooth function with
\[
	\eta(x) = 
	\begin{cases}
		1 & \text{if } |x|_\infty \leq \tfrac{1}{32 |\Gamma|}, \\
		0 & \text{if } |x|_\infty \geq \tfrac{1}{16 |\Gamma|}. 
	\end{cases}
\]
For $N\in\RR_+$, we define the scaling notation
\[
\eta_N(\xi):= \eta\big(2^{N \cdot A- N^{\chi}\cdot \rm{Id}}\xi\big)
\]
where $A$ is the matrix given in \eqref{matrixA} and $\rm{Id}$ is the $|\Gamma| \times |\Gamma|$ identity matrix. For dyadic integers $s \in 2^{u\NN}$, we define the \textit{annuli sets of fractions} by
\begin{equation}\label{annsets}
\Sigma_s := \begin{cases} \Sigma_{\leq s} & \text{ if } s=2^u, \\ \Sigma_{\leq s} \setminus \Sigma_{\leq s/2^u} & \text{ if } s>2^u, \end{cases}
\end{equation}
where the $\Sigma_{\leq \cdot}$ are the sets of Ionescu--Wainger fractions as in Theorem~\ref{IW}.
For $t\geq 2^u$, we set $F(t):=\max\{s \in 2^{u\NN} : s \leq t\}$.
We define
\[
\Xi_{\leq j^{\tau u}}(\xi) :=\sum_{a/q \in \Sigma_{\leq F(j^{\tau u})}} \eta_{j^{\tau}}(\xi - a/q)
\] 
and, for $s \in 2^{u\NN}$, we define the \textit{annuli functions}
\begin{equation}\label{annmulti}
\Xi_j^s(\xi):= \sum_{a/q \in \Sigma_{s}} \eta_{j^{\tau}}(\xi - a/q).
\end{equation}
By \eqref{annsets}, we have the telescoping property
\[
\Xi_{\leq j^{\tau u}}=\sum_{\substack{s \in 2^{u\NN} \\ s \leq j^{\tau u}}} \Xi_j^s.
\]
Note that $\eta_{j^\tau}(\xi)$ satisfies the hypothesis about the support for $\frkM$ in Theorem~\ref{IW} since $\frac{1}{8 |\Gamma|}2^{-j^{\tau}+j^{\tau\chi}} \leq e^{-j^{2 \tau u \varrho}}$ provided that $\varrho \leq \chi/(10 u)$. Using the $\Xi_{\leq j^{\tau u}}$ functions, we bound the long oscillations/jumps by
\begin{align*}
\calS_{\ZZ^\Gamma}^p\Big(\sum_{j=1}^n T_{\ZZ^\Gamma}[(\frkY_{N_j} - \frkY_{N_{j-1}})\Xi_{\leq j^{\tau u}}]f : n \in \NN\Big) + \calS_{\ZZ^\Gamma}^p\Big(\sum_{j=1}^n T_{\ZZ^\Gamma}[(\frkY_{N_j} - \frkY_{N_{j-1}})(1-\Xi_{\leq j^{\tau u}})]f : n \in \NN\Big). 
\end{align*}
These terms correspond to major and minor arcs respectively. 

\subsection{Minor arcs}
Since $V_1$ controls the oscillation/jump seminorms, we have 
\begin{align*}
\calS_{\ZZ^\Gamma}^p\Big(\sum_{j=1}^n T_{\ZZ^\Gamma}[(\frkY_{N_j} - \frkY_{N_{j-1}})(1-\Xi_{\leq j^{\tau u}})]f : n \in \NN\Big) 
\leq \sum_{n=1}^\infty \big\| T_{\ZZ^\Gamma}[(\frkY_{N_n} - \frkY_{N_{n-1}})(1-\Xi_{\leq n^{\tau u}})]f \big\|_{\ell^p(\ZZ^\Gamma)}.
\end{align*}
It then suffices to show that
\begin{equation*}
    \big\| T_{\ZZ^\Gamma}[(\frkY_{N_n} - \frkY_{N_{n-1}})(1-\Xi_{\leq n^{\tau u}})]f \big\|_{\ell^{p}(\ZZ^\Gamma)} \lesssim n^{-(1+\varepsilon)}\|f\|_{\ell^p(\ZZ^\Gamma)}
\end{equation*}
for some $\varepsilon > 0$. This uses Property~\ref{pr:1} and follows from the proof of \cite[Eqs. (5.8), (5.9)]{Troj} with only small changes due to our differing scaling in the definition of $\eta_N(\xi)$. We omit the details.  
\subsection{Introduction to major arcs}
Using the annuli multipliers \eqref{annmulti} and Proposition \eqref{prop:subadditive}, we bound the major arcs term by
\begin{align*}
\calS_{\ZZ^\Gamma}^p\Big(\sum_{j=1}^n \sum_{\substack{s \in 2^{u\NN} \\ s\leq j^{\tau u}}} T_{\ZZ^\Gamma}[(\frkY_{N_j} - \frkY_{N_{j-1}})\Xi_j^s]f : n \in \NN\Big)
 \leq \sum_{s \in 2^{u \NN}} \calS_{\ZZ^\Gamma}^p\Big(\sum_{\substack{1 \leq j \leq n \\ j \geq s^{1/(\tau u)}}}  T_{\ZZ^\Gamma}[(\frkY_{N_j} - \frkY_{N_{j-1}})\Xi_j^s]f : n \geq s^{1/\tau u} \Big).
\end{align*}
It then suffices to show for large $s\in 2^{u\NN}$ that
\begin{equation}\label{eq:major1}
\calS_{\ZZ^\Gamma}^p\bigg(\sum_{\substack{1 \leq j \leq n \\ j \geq s^{1/(\tau u)}}} T_{\ZZ^\Gamma}[(\frkY_{N_j} - \frkY_{N_{j-1}})\Xi_j^s]f : n \geq s^{1/\tau u}\bigg) \lesssim s^{-\varepsilon} \|f\|_{\ell^p(\ZZ^\Gamma)}
\end{equation}
for some $\varepsilon > 0$ since $\sum_{s \in 2^{u\NN}} s^{-\varepsilon} < \infty$. Let $\kappa_s:=s^{2 \lfloor \varrho \rfloor}$. By splitting the left hand side of \eqref{eq:major1} at $n \approx 2^{\kappa_s}$ into small and large scales, it suffices to prove that
\begin{equation}\label{small}
\calS_{\ZZ^\Gamma}^p\Big(\sum_{\substack{1 \leq j \leq n \\ j \geq s^{1/(\tau u)}}} T_{\ZZ^\Gamma}[(\frkY_{N_j} - \frkY_{N_{j-1}})\Xi_j^s]f : n^\tau \in [s^{1/u}, 2^{\kappa_s+1}] \Big) \lesssim s^{-\varepsilon}\|f\|_{\ell^p(\ZZ^\Gamma)}
\end{equation}
and
\begin{equation}
\label{large}
\calS_{\ZZ^\Gamma}^p\Big(\sum_{\substack{1 \leq j \leq n \\ j \geq 2^{\kappa_s/\tau}}}  T_{\ZZ^\Gamma}[(\frkY_{N_j} - \frkY_{N_{j-1}})\Xi_j^s]f : n^\tau > 2^{\kappa_s} \Big) \lesssim s^{-\varepsilon} \|f\|_{\ell^p(\ZZ^\Gamma)}.
\end{equation}

For the small scales \eqref{small}, we will use the Rademacher--Menshov inequality \eqref{eq:remark3} and Theorem~\ref{IW}. For the large scales \eqref{large}, we will use the Magyar--Stein--Wainger sampling principle from \cite[Proposition 2.1]{MSW} and its counterpart for the jump inequality from \cite[Theorem 1.7]{MSZ1}. We first establish an approximation lemma to replace our discrete multipliers with continuous counterparts. Let
\begin{equation}
    v_j^s(\xi):=\sum_{a/q \in \Sigma_s} G(a/q)\big(\Theta_{N_j} - \Theta_{N_{j-1}}\big)(\xi - a/q)\eta_{j^\tau}(\xi-a/q)
\end{equation}
and
\begin{equation}
    \Lambda_j^s(\xi):= \sum_{a/q \in \Sigma_s} \big(\Theta_{N_j} - \Theta_{N_{j-1}}\big)(\xi - a/q)\eta_{j^\tau}(\xi-a/q).
\end{equation}

\begin{lemma}
\label{multiplierapprox}
Let $M \in \NN$, $\alpha' > 0$, and $S_M:=\lfloor 2^{M^\tau - 3M^{\tau \chi}}\rfloor$. For $j\in\NN$ with $s^{1/(\tau u)} \leq j$ and $M \leq j \leq 2M$, we have
\begin{equation}
\label{approx1}
\|(\frkY_{N_j}-\frkY_{N_{j-1}})\Xi_j^s - v_j^s\|_{\ell^\infty(\TT^\Gamma)} \lesssim j^{-\alpha' \tau}
\end{equation}
and
\begin{equation}
\label{approx2}
\|(\frkY_{N_j}-\frkY_{N_{j-1}})\Xi_j^s - \Lambda_j^s \frkM_{S_M}\|_{\ell^\infty(\TT^\Gamma)} \lesssim j^{-\alpha' \tau}.
\end{equation}
\end{lemma}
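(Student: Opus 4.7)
Plan: Both estimates reduce to pointwise bounds on individual bumps $\eta_{j^\tau}(\cdot - a/q)$. The fractions $a/q \in \Sigma_s$ have denominators in $P_{\leq s}$, so distinct fractions differ by at least $\sim e^{-2s^\varrho}$ in some coordinate; meanwhile each bump $\eta_{j^\tau}(\cdot - a/q)$ is supported in a box of radii at most $\tfrac{1}{16|\Gamma|}\, 2^{-j^\tau|\gamma| + j^{\tau\chi}}$. Since $s \leq j^{\tau u}$ and $u\varrho \leq \chi/10$, we have $s^\varrho \leq j^{\tau\chi/10} \ll j^\tau$, so these supports are pairwise disjoint. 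Hence the $\ell^\infty(\TT^\Gamma)$ norms in \eqref{approx1} and \eqref{approx2} reduce to a supremum of pointwise estimates, one for each $a/q \in \Sigma_s$ and each $\xi \in \mathrm{supp}\, \eta_{j^\tau}(\cdot - a/q)$. Throughout I write $\theta := \xi - a/q$ and note $|\theta_\gamma| \leq \tfrac{1}{16|\Gamma|}\, N_j^{-|\gamma|}\, 2^{j^{\tau\chi}}$ for every $\gamma \in \Gamma$.

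For \eqref{approx1}, apply Property~\ref{pr:3} with $L := C\, 2^{j^{\tau\chi}}$. The requirement $q \leq L$ follows from $q \leq \max(s, e^{s^\varrho}) \leq e^{j^{\tau u\varrho}}$ combined with $u\varrho \leq \chi/10$, and the admissibility $L \leq \exp(c\sqrt{\log N_j})(\log N_j)^{-\alpha'}$ holds since $\tau\chi < \tau/2$, so $2^{j^{\tau\chi}} \ll \exp(c\, j^{\tau/2}) \approx \exp(c\sqrt{\log N_j})$. Property~\ref{pr:3} then yields
\[
\big|(\frkY_{N_j} - \frkY_{N_{j-1}})(\xi) - G(a/q)(\Theta_{N_j} - \Theta_{N_{j-1}})(\theta)\big| \lesssim (\log N_j)^{-\alpha'} \lesssim j^{-\alpha'\tau},
\]
and multiplication by $|\eta_{j^\tau}(\xi - a/q)| \leq 1$ finishes \eqref{approx1}.

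For \eqref{approx2}, use \eqref{approx1} to reduce the problem to bounding $\|v_j^s - \Lambda_j^s\, \frkM_{S_M}\|_{\ell^\infty}$. By disjointness and Property~\ref{pr:2} (which yields $|(\Theta_{N_j} - \Theta_{N_{j-1}})(\theta)| \lesssim 1$), it suffices to prove $|G(a/q) - \frkM_{S_M}(\xi)| \lesssim j^{-\alpha'\tau}$ on each support. Split according to the size of $q$. If $q \leq (\log S_M)^\beta$, then Lemma~\ref{approxlemma} applied with $N := S_M$ and $\calK := \vartheta_\Omega(S_M)^{-1}\ind{\Omega_{S_M}}$ gives
\[
\frkM_{S_M}(\xi) = G(a/q)\,\tfrac{|\Omega_{S_M}|}{\vartheta_\Omega(S_M)}\,\Phi_{S_M}(\theta) + O\big(\exp(-c\sqrt{\log S_M})\big).
\]
Because $j \leq 2M$ and $S_M = 2^{M^\tau - 3M^{\tau\chi}}$, one has $|S_M^{|\gamma|}\, \theta_\gamma| \leq 2^{j^{\tau\chi} - 3|\gamma|M^{\tau\chi}} \leq 2^{(2^{\tau\chi} - 3)M^{\tau\chi}}$, so $\Phi_{S_M}(\theta) = 1 + o(1)$ by the mean value theorem on the exponential; the prime number theorem supplies $|\Omega_{S_M}|/\vartheta_\Omega(S_M) = 1 + O(\exp(-c\sqrt{\log S_M}))$, closing this subcase. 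If instead $q > (\log S_M)^\beta$, the Gaussian sum bound \eqref{gausssumest} gives $|G(a/q)| \leq q^{-\delta} \lesssim M^{-\tau\beta\delta}$, and Proposition~\ref{weylinq} applied to $\frkM_{S_M}(\xi)$ with hypotheses verified using $q^2 \lesssim 2^{2j^{\tau\chi}} \ll S_M^{|\gamma_0|}$ gives $|\frkM_{S_M}(\xi)| \lesssim (\log S_M)^{-\alpha'} \approx M^{-\alpha'\tau}$; taking $\beta$ sufficiently large with $\beta\delta > \alpha'$ makes both bounds $o(j^{-\alpha'\tau})$.

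The main obstacle is the large-$q$ regime in \eqref{approx2}: $\Sigma_s$ admits denominators up to $e^{s^\varrho}$, well beyond the Siegel--Walfisz range $(\log S_M)^\beta$ where Lemma~\ref{approxlemma} is effective. The choice $\varrho \leq \min(\chi/(10u), \delta/(8\tau))$ in the parameters discussion is precisely what ensures that this regime is controlled by the Gaussian sum decay and Weyl's inequality, as used in the second subcase above.
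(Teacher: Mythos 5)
Your proof of \eqref{approx1} is the same as the paper's: disjointness of the bumps reduces the claim to a single fraction, and Property~\ref{pr:3} is applied with $L=2^{j^{\tau\chi}}$, checking $q\le e^{s^\varrho}\le 2^{j^{\tau\chi}}\le \exp(\sqrt{\log N_j})(\log N_j)^{-\alpha'}$ exactly as you do. For \eqref{approx2} the reduction is also the same (use \eqref{approx1}, then bound $|G(a/q)-\frkM_{S_M}(\xi)|$ on each bump), but the paper handles the core estimate differently: it inserts the intermediate quantity $G(a/q)\Phi_{S_M}(\xi-a/q)$, bounds $|G(a/q)||1-\Phi_{S_M}(\xi-a/q)|\le q^{-\delta}|S_M^A(\xi-a/q)|_\infty$ by the mean value theorem, and then quotes Trojan's approximation lemma \cite[Lemma 3]{Troj} (with $N=S_M$, $L=2^{(2M)^{\tau\chi}}$) for $|G(a/q)\Phi_{S_M}(\xi-a/q)-\frkM_{S_M}(\xi)|\lesssim(\log S_M)^{-\alpha'/\chi}$; that cited lemma is valid for all $q\le L$ up to $\exp(c\sqrt{\log S_M})$, so no case split on $q$ is needed at this level. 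What you do instead is essentially re-derive that lemma from scratch: Siegel--Walfisz (via Lemma~\ref{approxlemma}) for $q\le(\log S_M)^{\beta}$, and Gauss-sum decay plus Weyl's inequality for larger $q$. That dichotomy is indeed what sits inside the cited lemma, so your route is legitimate and self-contained, at the cost of redoing work the paper outsources.

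There is, however, one genuine gap in your large-$q$ case. Proposition~\ref{weylinq} requires a \emph{single coordinate} $\gamma_0$ admitting a rational approximation $a/q$ in lowest terms with $(\log S_M)^{\beta}\le q\le S_M^{|\gamma_0|}(\log S_M)^{-\beta}$. For $a/q\in\Sigma_s$ the condition $\gcd(a,q)=1$ only says the \emph{tuple} is reduced; an individual component $a_{\gamma_0}/q$ may reduce to a fraction with a much smaller denominator $q_{\gamma_0}$, possibly below $(\log S_M)^{\beta}$ even when $q>(\log S_M)^{\beta}$, in which case Weyl's inequality does not apply as you invoke it. The standard repair is to note that $q=\lcm_{\gamma}(q_\gamma)\le\prod_\gamma q_\gamma$, so some $q_{\gamma_0}\ge q^{1/|\Gamma|}$; one should therefore place the case split at $q\le(\log S_M)^{\beta|\Gamma|}$ (extending the Siegel--Walfisz case accordingly, which is harmless) or apply Weyl with exponent $\beta/|\Gamma|$, requiring $\beta/|\Gamma|>\beta_{\alpha'}$. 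Relatedly, the threshold exponent you call $\beta$ is a new free parameter of this argument, not the $\beta>\beta_\rho$ fixed in the parameters discussion, and it must be chosen large enough that both $\beta\delta>\alpha'$ and the (corrected) Weyl hypothesis hold; with that choice, and with the quantitative bound $|1-\Phi_{S_M}(\theta)|\lesssim|S_M^A\theta|_\infty\lesssim 2^{-cM^{\tau\chi}}$ replacing your ``$1+o(1)$'', the argument closes.
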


\begin{proof}
For \eqref{approx1}, since the $\eta_{j^{\tau}}(\xi - a/q)$ bump functions in the definitions of $\Xi_j^s$ and $v_j^s$ have disjoint supports for distinct fractions $a/q$, it suffices to prove for a fixed $a/q \in \Sigma_s$ that
\[\big\| \eta_{j^{\tau}}(\xi - a/q)\big[\big(\frkY_{N_j}-\frkY_{N_{j-1}}\big)(\xi) -  G(a/q)\big(\Theta_{N_j} - \Theta_{N_{j-1}}\big)(\xi - a/q)\big]\big\|_{\ell^\infty(\TT^\Gamma)} \lesssim j^{-\alpha' \tau }\]
with the implied constant independent of the choice of $a/q$. Using the definition of $\Sigma_s$, property~\textit{\ref{IW1}} from Theorem~\ref{IW}, $s \leq j^{\tau u}$, and $\varrho \leq \chi/(10 u)$, we have 
$q \leq e^{s^{\varrho}} \leq e^{j^{\tau u \rho}}\leq 2^{j^{\tau \chi}} =: L_1.$
On the support of $\eta_{j^\tau}(\xi - a/q)$, we have $|\xi_\gamma - a_\gamma/q| \lesssim N_j^{-|\gamma|} L_1$ for all $\gamma \in \Gamma$. Moreover, we have
\[L_1 = 2^{j^{\tau \chi}} \leq 2^{j^{\tau /2}}j^{-\tau \alpha'}\lesssim \exp(\sqrt{\log N_j})(\log N_j)^{-\alpha'}.\]
The estimate \eqref{approx1} then follows from Property~\ref{pr:3} with $\alpha = \alpha'$ and $L = L_1$.

For~\eqref{approx2}, we use~\eqref{approx1} and are reduced to showing that
\[
\|v_j^s - \Lambda_j^s \frkM_{S_M}\|_{\ell^\infty(\TT^\Gamma)} \lesssim j^{-\alpha' \tau}.
\]
Fixing $\xi$ in the support of $\eta_{j^\tau}(\xi - a/q)$, we have
\begin{align*}
|\xi_\gamma - a_\gamma/q|\leq 2^{-M^\tau |\gamma|} 2^{(2M)^{\tau \chi}} \leq 2^{-M^\tau |\gamma|} 2^{2M^{\tau \chi}}= 2^{-M^\tau |\gamma|} 2^{3M^{\tau \chi}|\gamma|} 2^{-3M^{\tau \chi}|\gamma|} 2^{2M^{\tau \chi}}\leq S_M^{-|\gamma|} 2^{-(j/2)^{\tau \chi}}
\end{align*}
for all $\gamma \in \Gamma$. By the triangle inequality, we have
\[|G(a/q) - \frkM_{S_M}(\xi)| \leq |G(a/q) - G(a/q) \Phi_{S_M}(\xi - a/q)| + |G(a/q) \Phi_{S_M}(\xi - a/q) - \frkM_{S_M}(\xi)|.\]
For the first term, we use the estimate \eqref{gausssumest}, the mean value theorem, and Property \ref{pr:2} to obtain
\begin{align*}
    |G(a/q) - G(a/q) \Phi_{S_M}(\xi - a/q)| \leq q^{-\delta} \big|S_M^A (\xi - a/q)\big|_{\infty}
    \leq q^{-\delta} 2^{-(j/2)^{\tau \chi}} \lesssim j^{-\alpha' \tau}. 
\end{align*}
For the second term, we use that 
\begin{align*}
    |\xi_\gamma - a_\gamma/q| \leq S_M^{-|\gamma|} 2^{-M^{\tau \chi}} \leq S_M^{-|\gamma|} 2^{(2M)^{\tau \chi}} =: S_M^{-|\gamma|} L_2 
\end{align*}
and
$q \leq 2^{j^{\tau \chi}} \leq 2^{(2M)^{\tau \chi}} = L_2 \leq \exp\big(\sqrt{ \log S_M}\big) (\log S_M)^{-\alpha' / \chi}.$
Hence, we may apply \cite[Lemma 3]{Troj} with $\alpha = \alpha'/\chi$, $N = S_M$, and $L = L_2$ to obtain
\begin{align*}
    |G(a/q) \Phi_{S_M}(\xi - a/q) - \frkM_{S_M}(\xi)| \lesssim \log(S_M)^{-\alpha'/\chi} \lesssim (M^\tau - 3M^{\tau \chi})^{-\alpha'/\chi}  \lesssim (2M)^{-\alpha' \tau} \lesssim j^{-\alpha' \tau}.
\end{align*}
This completes the proof of \eqref{approx2}.
\end{proof}
\subsection{Small scales}
 Using that $V_2$ dominates oscillations/jumps, splitting $[s^{1/u},2^{\kappa_s+1}]$ into dyadic intervals, and preparing via the triangle inequality to use \eqref{approx2}, we bound the left hand side of \eqref{small} by
\begin{align*}
\overbrace{\sum_{M \in 2^{\NN} \cap [s^{1/u},2^{\kappa_s}]} \Big\|V_2\Big(\sum_{\substack{1 \leq j \leq n \\ j \geq s^{1/(\tau u)}}} T_{\ZZ^\Gamma}[\Lambda_j^s \frkM_{S_M}]f : n^\tau \in [M,2M] \Big)\Big\|_{\ell^p(\ZZ^\Gamma)}}^{\text{Main Term 1}}
\\ +\underbrace{\sum_{M \in 2^{\NN} \cap [s^{1/u},2^{\kappa_s}]} \Big\|V_2\Big(\sum_{\substack{1 \leq j \leq n \\ j \geq s^{1/(\tau u)}}} T_{\ZZ^\Gamma}[(\frkY_{N_j} - \frkY_{N_{j-1}})\Xi_j^s - \Lambda_j^s \frkM_{S_M}]f : n^\tau \in [M,2M] \Big)\Big\|_{\ell^p(\ZZ^\Gamma)}}_{\text{Error Term 1}}.
\end{align*}
For Error Term~1, it will suffice to show that 
\begin{equation}\label{error1}
    \big\|T_{\ZZ^\Gamma}[(\frkY_{N_n} - \frkY_{N_{n-1}})\Xi_n^s - \Lambda_n^s \frkM_{S_M}]f\big\|_{\ell^p(\ZZ^\Gamma)} \lesssim n^{-(1+\varepsilon')} \|f\|_{\ell^p(\ZZ^\Gamma)}
\end{equation}
for some $\varepsilon' > 0$ since we would then bound it by
\begin{align*}
\sum_{n \geq s^{1/(\tau u)}} n^{-(1+\varepsilon')} \|f\|_{\ell^p(\ZZ^\Gamma)}
\lesssim s^{-\varepsilon'/(\tau u)} \|f\|_{\ell^p(\ZZ^\Gamma)} \lesssim s^{-\varepsilon}\|f\|_{\ell^p(\ZZ^\Gamma)}  
\end{align*}
using that $V_1$ dominates $V_2$. We note by Theorem~\ref{IW} that
\[ \big\|T_{\ZZ^\Gamma}[(\frkY_{N_n} - \frkY_{N_{n-1}})\Xi_n^s - \Lambda_n^s \frkM_{S_M}]f\big\|_{\ell^{p_0}(\ZZ^\Gamma)} \lesssim \|f\|_{\ell^{p_0}(\ZZ^\Gamma)}\]
and, by \eqref{approx2} with $\alpha' = \rho$, that
\[ \big\|T_{\ZZ^\Gamma}[(\frkY_{N_n} - \frkY_{N_{n-1}})\Xi_n^s - \Lambda_n^s \frkM_{S_M}]f\big\|_{\ell^2(\ZZ^\Gamma)} \lesssim n^{-\rho \tau} \|f\|_{\ell^2(\ZZ^\Gamma)}.\]
Interpolation of the above inequalities yields \eqref{error1}.
 
For Main Term~1, we apply the Rademacher--Menshov inequality \eqref{eq:remark3} to bound it by
\begin{align*}
 \sum_{M \in 2^{\NN} \cap [s^{1/u},2^{\kappa_s}]} \sum_{i = 0}^{\log_2(2M)} \bigg\|\Big(\sum_j \Big| \sum_{k \in I_{i,j}^M} T_{\ZZ^\Gamma}[\Lambda_k^s \frkM_{S_M}]f \Big|^2\Big)^{1/2}\bigg\|_{\ell^p(\ZZ^\Gamma)},
\end{align*}
where $j$ is taken over $j \geq 0$ such that $I_{i,j}^M:= [j2^i,(j+1)2^i] \cap [M^{1/\tau}, (2M)^{1/\tau}] \neq \emptyset$. Let $\tilde{\eta}_{N}(\xi):=\eta_N(\xi/2)$. Then $\tilde{\eta}_{N} \eta_{k^\tau} = \eta_{k^\tau}$ for $k^\tau \geq N$ due to the nesting supports. This lets us write
\[\Lambda_k^s \frkM_{S_M} = \Lambda_k^s \frkM_{S_M} \sum_{a/q \in \Sigma_s} \tilde{\eta}_M(\xi - a/q)=:\Lambda_k^s \frkM_{S_M}\tilde{\Xi}_{M^{1/\tau}}^s\]
for $k \in I_{i,j}^M$ since then $k \geq M^{1/\tau}$. We have for any $p\in(1,\infty)$ that
\[\bigg\|\Big(\sum_j \Big| \sum_{k \in I_{i,j}^M} T_{\ZZ^\Gamma}[\Lambda_k^s]g \Big|^2\Big)^{1/2}\bigg\|_{\ell^{p}(\ZZ^\Gamma)} \lesssim \|g\|_{\ell^{p}(\ZZ^\Gamma)}\]
since, by Theorem~\ref{IW}, the above estimate is a consequence of its continuous counterpart
\begin{align*}
\norm[\bigg]{ \Big(\sum_{j}\big|\sum_{k\in I_{i,j}^M}T_{\RR^\Gamma}\big[(\Theta_{N_k}-\Theta_{N_{k-1}})\eta_k^\tau\big]f\big|^2\Big)^{1/2}}_{L^{p}(\RR^\Gamma)}
\lesssim \norm{f}_{L^{p}(\RR^\Gamma)}.
\end{align*}
The above square function estimate follows by  appealing to Property~2 and arguments from Littlewood--Paley theory. We refer to \cite{MSZ2} for more details, see also \cite[Theorem 4.3, p. 42]{MSZ3}. Thus,
\begin{equation}\label{square_estimate_1}
    \bigg\|\Big(\sum_j \Big| \sum_{k \in I_{i,j}^M} T_{\ZZ^\Gamma}[\Lambda_k^s \frkM_{S_M}]f \Big|^2\Big)^{1/2}\bigg\|_{\ell^{p_0}(\ZZ^\Gamma)} 
\lesssim \big\|T_{\ZZ^\Gamma}[\frkM_{S_M}]f\big\|_{\ell^{p_0}(\ZZ^\Gamma)} \lesssim \|f\|_{\ell^{p_0}(\ZZ^\Gamma)}
\end{equation}
using the uniform $\ell^{p}$-boundedness of the averaging operators. We get an improved bound on $\ell^2$. To do this, we show that
\[\big\|\frkM_{S_M} \tilde{\Xi}_{M^{1/\tau}}^s\big\|_{\ell^{\infty}(\TT^\Gamma)}\lesssim s^{-\delta}\]
for $M \in 2^\NN \cap [s^{1/u},2^{\kappa_s}]$. Since the bump functions in the sum have disjoint supports, it suffices to prove for a fixed $a/q \in \Sigma_s$ that 
\[ \|\frkM_{S_M}(\xi) \tilde{\eta}_M(\xi - a/q) \|_{\ell^\infty(\TT^\Gamma)} \lesssim s^{-\delta}\]
with the implied constant independent of the choice of $a/q$. On the support of $\tilde{\eta}_M(\xi - a/q)$, we have
\[|\xi_\gamma - a_\gamma/q| \leq \frac{1}{8|\Gamma|} 2^{-M|\gamma|} 2^{M^{\chi}} \leq 2^{-M^{\tau}|\gamma|} 2^{2M^{\tau \chi}}.\]
We follow the same arguments as in the proof of \eqref{approx2}, choosing $\alpha' = \delta u /\tau$, to show that 
\[ |G(a/q) - \frkM_{S_M}(\xi)| \lesssim M^{-\delta u} \leq s^{-\delta}.\]
For any $\xi \in \TT^\Gamma$ and $a/q \in \Sigma_s$, we have
\[\big|\frkM_{S_M}(\xi) \tilde{\Xi}_{M^{1/\tau}}^s(\xi) \big| \leq |\frkM_{S_M}(\xi)| \leq |\frkM_{S_M}(\xi) - G(a/q)| + |G(a/q)| \lesssim s^{-\delta}\]
using that $|G(a/q)| \lesssim q^{-\delta} \lesssim s^{-\delta}$ since $q \geq s/2^u$ by the construction of $\Sigma_s$. Hence,
\begin{equation}
\label{square_estimate_2}
    \bigg\|\Big(\sum_j \Big| \sum_{k \in I_{i,j}^M} T_{\ZZ^\Gamma}[\Lambda_k^s \frkM_{S_M}]f \Big|^2\Big)^{1/2}\bigg\|_{\ell^2(\ZZ^\Gamma)}
    \lesssim \big\|T_{\ZZ^\Gamma}[\frkM_{S_M}\tilde{\Xi}_{M_{1/\tau}^s}]f\big\|_{\ell^{2}(\ZZ^\Gamma)} \lesssim s^{-\delta}\|f\|_{\ell^{2}(\ZZ^\Gamma)}.
\end{equation}
Interpolation of \eqref{square_estimate_1} with \eqref{square_estimate_2} then gives that 
\[ 
\bigg\|\Big(\sum_j \Big| \sum_{k \in I_{i,j}^M} T_{\ZZ^\Gamma}[\Lambda_k^s \frkM_{S_M}]f \Big|^2\Big)^{1/2}\bigg\|_{\ell^p(\ZZ^\Gamma)}\lesssim s^{-8\varrho}\|f\|_{\ell^p(\ZZ^\Gamma)}
\]
since $8\varrho \leq \delta/(\rho \tau)$. Thus, we may dominate Main Term~1 by
\begin{align*}
\sum_{M \in 2^{\NN} \cap [s^{1/u},2^{\kappa_s}]} \sum_{i = 0}^{\log_2(2M)} s^{-8\varrho}\|f\|_{\ell^p(\ZZ^\Gamma)}\lesssim \kappa_s^2 s^{-8\varrho} \|f\|_{\ell^p(\ZZ^\Gamma)} \lesssim s^{-4\varrho} \|f\|_{\ell^p(\ZZ^\Gamma)}
\end{align*}
since $\kappa_s \leq s^{2\varrho}$, concluding the proof of \eqref{small}.

\subsection{Large scales} 
Since $V_1$ dominates $\calS_{\ZZ^\Gamma}^p$, we may bound the left hand side of \eqref{large} by
\begin{align*}
\overbrace{\calS_{\ZZ^\Gamma}^p\Big(\sum_{\substack{1 \leq j \leq n \\ j \geq 2^{\kappa_s/\tau}}}  T_{\ZZ^\Gamma}[v_j^s]f : n^\tau > 2^{\kappa_s} \Big)}^{\text{Main Term 2}}
+ \overbrace{\sum_{n \geq 2^{\kappa_s/\tau}}  \big\|T_{\ZZ^\Gamma}[(\frkY_{N_n} - \frkY_{N_{n-1}})\Xi_n^s - v_n^s]f \big\|_{\ell^p(\ZZ^\Gamma)}}^{\text{Error Term 2}}.
\end{align*}
For Error Term~2, it will suffice to show that
\begin{equation}\label{error2}
    \big\|T_{\ZZ^\Gamma}[(\frkY_{N_n} - \frkY_{N_{n-1}})\Xi_n^s - v_n^s]f \big\|_{\ell^p(\ZZ^\Gamma)} \lesssim e^{(|\Gamma|+1)s^\varrho} n ^{-(1+\varepsilon')} \|f\|_{\ell^p(\ZZ^\Gamma)}
\end{equation}
for some $\varepsilon' > 0$ since we would then bound it by
\begin{align*}
e^{(|\Gamma|+1)s^\varrho} \sum_{n \geq 2^{\kappa_s/\tau}} n^{-(1+\varepsilon')} \|f\|_{\ell^p(\ZZ^\Gamma)}
\lesssim e^{(|\Gamma|+1)s^\varrho} 2^{-s^{2\varrho}\varepsilon'/\tau} \|f\|_{\ell^p(\ZZ^\Gamma)} \lesssim s^{-\varepsilon} \|f\|_{\ell^p(\ZZ^\Gamma)}.
\end{align*}
We have \[\big\|T_{\ZZ^\Gamma}[(\frkY_{N_n} - \frkY_{N_{n-1}})\Xi_n^s - v_n^s]f \big\|_{\ell^2(\ZZ^\Gamma)} \lesssim n^{-\rho \tau} \|f\|_{\ell^2(\ZZ^\Gamma)}\] 
by \eqref{approx1} with $\alpha' = \rho$. We also have 
\[\big\|T_{\ZZ^\Gamma}[(\frkY_{N_n} - \frkY_{N_{n-1}})\Xi_n^s - v_n^s]f \big\|_{\ell^{p_0}(\ZZ^\Gamma)} \lesssim e^{(|\Gamma|+1)s^\varrho} \|f\|_{\ell^{p_0}(\ZZ^\Gamma)}\] 
simply by the triangle inequality and property \textit{(i)} from Theorem~\ref{IW}. Consequently \eqref{error2} follows by interpolation.

For Main Term~2, we define
\[
w^s(\xi):= \sum_{a/q \in \Sigma_s} G(a/q)\tilde{\eta}_{2^{\kappa_s}}(\xi - a/q), \quad \Pi^s(\xi):=\sum_{a/q \in \Sigma_s} \tilde{\eta}_{2^{\kappa_s}}(\xi - a/q),\]
and
\[
\omega_n^s(\xi):= \sum_{2^{\kappa_s/\tau} \leq j \leq n} (\Theta_{N_j} - \Theta_{N_{j-1}})(\xi)\eta_{j^\tau}(\xi). 
\]

Let $Q_s:=\lcm(q: a/q \in \Sigma_s).$ By property \textit{(iv)} from Theorem~\ref{IW}, we have $Q_s \leq 3^s$. The function $\omega_n^s$ is supported on $[-\frac{1}{4Q_s}, \frac{1}{4Q_s}]$ for large $s\in2^{u\NN}$ since, on the support of $\eta_{2^{\kappa_s}}$, we have $|\xi_\gamma| \leq2^{-2^{-\kappa_s}+2^{\kappa_s\chi}} \leq (4Q_s)^{-1}$
for all $\gamma \in \Gamma$ and large $s$. We also have 
\[\sum_{2^{\kappa_s/\tau} \leq j \leq n} v_j^s(\xi) = w^s(\xi)\sum_{b \in \ZZ^\Gamma} \omega_n^s(\xi - b/Q_s)  .\]
Therefore, it suffices to prove
\begin{equation}\label{largescale1}
\calS_{\ZZ^\Gamma}^p\Big(T_{\ZZ^\Gamma}\Big[\sum_{b \in \ZZ^\Gamma} \omega_n^s(\cdot - b/Q_s) \Big]f : n^\tau > 2^{\kappa_s} \Big)\lesssim\|f\|_{\ell^p(\ZZ^\Gamma)}
\end{equation}
and
\begin{equation}
\label{largescale2}
\big\| T_{\ZZ^\Gamma}[w^s]f\|_{\ell^p(\ZZ^\Gamma)} \lesssim s^{-\varepsilon} \|f\|_{\ell^p(\ZZ^\Gamma)}
\end{equation}
for some $\varepsilon > 0$. 

By the Magyar--Stein--Wainger sampling principle \cite[Proposition 2.1]{MSW} for the oscillation seminorm or the sampling principle for the jumps \cite[Theorem 1.7]{MSZ1}, \eqref{largescale1} follows from
\begin{equation}\label{largescale1bis}
\calS_{\RR^\Gamma}^p(T_{\RR^\Gamma}[\omega_n^s]f : n^\tau > 2^{\kappa_s}) \lesssim \|f\|_{L^p(\RR^\Gamma)}.
\end{equation}
To prove \eqref{largescale1bis}, we use that the $\omega_n^s$ functions are almost telescoping. We define
\[\Delta_n^s(\xi) := \sum_{2^{\kappa_s/\tau} \leq j \leq n} (\Theta_{N_j} - \Theta_{N_{j-1}})(\xi) = (\Theta_{N_n} - \Theta_{N_{2^{\kappa_s/\tau} - 1}})(\xi). \]
Then \eqref{largescale1bis} follows from 
\begin{equation}\label{largecontdelta}
\calS_{\RR^\Gamma}^p(T_{\RR^\Gamma}[\Delta_n^s]f : n^\tau > 2^{\kappa_s})\lesssim\|f\|_{L^p(\RR^\Gamma)}
\end{equation}
since the error term is bounded by
\begin{equation*}
\sum_{n > 2^{\kappa_s/\tau}} \big\|T_{\RR^\Gamma}[(\Theta_{N_n} - \Theta_{N_{n-1}})(\eta_{n^\tau}-1)]f\big\|_{L^p(\RR^\Gamma)}\lesssim\|f\|_{L^p(\RR^\Gamma)}
\end{equation*}
using Property \ref{pr:2} and interpolation.

On the other hand, due to translation invariance of $\calS_{\RR^\Gamma}^p$, ~\eqref{largecontdelta} follows from
\begin{align}\label{largeconttheta}
\calS_{\RR^\Gamma}^p(T_{\RR^\Gamma}[\Theta_t]f : t > 0) \lesssim \|f\|_{L^p(\RR^\Gamma)}.
\end{align}
For the jump inequality, the estimate \eqref{largeconttheta} was proven in \cite[Theorem 1.22, Theorem 1.30]{MSZ2} for both $\Phi_t$ and $\Psi_t$. For the oscillation inequality, \eqref{largeconttheta} was proven in \cite[Eq. 3.38]{MSS} for $\Phi_t$ and in \cite[Theorem 1.9]{S} for $\Psi_t$. This concludes the proof of \eqref{largescale1}. 

For \eqref{largescale2},
we note by \eqref{gausssumest} that
\begin{equation}\label{largescale3}
\big\| T_{\ZZ^\Gamma}[w^s]f\|_{\ell^2(\ZZ^\Gamma)} \lesssim s^{-\delta} \|f\|_{\ell^2(\ZZ^\Gamma)}. 
\end{equation}
On $\ell^{p_0}$, we start by splitting
\[w^s = \Pi^s\frkM_{J_s} + (w^s - \Pi^s\frkM_{J_s}),\]
where $J_s = \lfloor 2^{2^{\kappa_s} - 3 \cdot 2^{\kappa_s \chi}} \rfloor$.
By Theorem~\ref{IW}, we have
\begin{equation}
\label{largescale4}
\big\| T_{\ZZ^\Gamma}[\Pi^s\frkM_{J_s}]f\|_{\ell^{p_0}(\ZZ^\Gamma)} \lesssim \|f\|_{\ell^{p_0}(\ZZ^\Gamma)}.
\end{equation}
Let $p_{00} \in (1,\infty)$. Then 
\begin{equation} 
\label{largescale5}
\big\| T_{\ZZ^\Gamma}[w^s - \Pi^s\frkM_{J_s}]f\|_{\ell^{p_{00}}(\ZZ^\Gamma)} \lesssim e^{(|\Gamma|+1)s^\varrho} \|f\|_{\ell^{p_{00}}(\ZZ^\Gamma)}
\end{equation}
by property \textit{(i)} from Theorem~\ref{IW}. Therefore, it suffices to show that 
\begin{equation}
\label{largescale6}
\big\| T_{\ZZ^\Gamma}[w^s - \Pi^s\frkM_{J_s}]f\|_{\ell^2(\ZZ^\Gamma)} \lesssim 2^{-\chi s^{2\rho}} \|f\|_{\ell^2(\ZZ^\Gamma)}
\end{equation}
since interpolating \eqref{largescale6} with \eqref{largescale5} for an appropriate choice of $p_{00}$ gives 
\[ \big\| T_{\ZZ^\Gamma}[w^s - \Pi^s\frkM_{J_s}]f\|_{\ell^{p_0}(\ZZ^\Gamma)} \lesssim \|f\|_{\ell^{p_0}(\ZZ^\Gamma)}, \]
combining this with \eqref{largescale4} gives
\[ \big\| T_{\ZZ^\Gamma}[w^s]f\|_{\ell^{p_0}(\ZZ^\Gamma)} \lesssim \|f\|_{\ell^{p_0}(\ZZ^\Gamma)}, \]
and interpolating the above inequality with \eqref{largescale3} completes the proof of \eqref{largescale2} and, thereby, that of \eqref{large}. The proof of \eqref{largescale6} will proceed similarly as in the proof of \eqref{approx2}. For $\xi$ in the support of $\tilde{\eta}_{2^{\kappa_s}}(\xi - a/q)$, we have
\begin{align*}
|\xi_\gamma - a_\gamma/q| \leq \frac{1}{16|\Gamma|} 2^{-2^{\kappa_s} |\gamma|} 2^{2^{\kappa_s \chi}} 
\leq 2^{-2^{\kappa_s} |\gamma|} 2^{2^{\kappa_s \chi}}\leq J_s^{-|\gamma|} 2^{-2^{\kappa_s \chi}} 
\end{align*} 
for all $\gamma \in \Gamma$. By the triangle inequality, we have
\[|G(a/q) - \frkM_{J_s}(\xi)| \leq |G(a/q) - G(a/q) \Phi_{J_s}(\xi - a/q)| + |G(a/q) \Phi_{J_s}(\xi - a/q) - \frkM_{J_s}(\xi)|.\]
For the first term, we use ~\eqref{gausssumest} and the mean value theorem to write
\begin{align*}
    |G(a/q) - G(a/q) \Phi_{J_s}(\xi - a/q)| \leq q^{-\delta} \big|J_s^A (\xi - a/q)\big|_{\infty}
    \leq q^{-\delta} 2^{-2^{\kappa_s \chi}} \lesssim 2^{-\chi s^{2\varrho}}. 
\end{align*}
For the second term, we use that
\begin{align*}
    |\xi_\gamma - a_\gamma/q| \leq J_s^{-|\gamma|} 2^{-2^{\kappa_s \chi}} \leq J_s^{-|\gamma|} e^{s^{\varrho}} =: J_s^{-|\gamma|} L_3 
\end{align*}
and
$q \leq L_3 \leq \exp\big(\sqrt{ \log J_s}\big) (\log J_s)^{-1}$
to apply \cite[Lemma 3]{Troj} with $\alpha = 1$, $N = J_s$, and $L = L_3$. This gives
\begin{align*}
    |G(a/q) \Phi_{J_s}(\xi - a/q) - \frkM_{J_s}(\xi)| \lesssim \log(J_s)^{-1} \lesssim (2^{\kappa_s} - 3 \cdot 2^{\kappa_s \chi})^{-1}
     \lesssim 2^{-\chi \kappa_s} \lesssim 2^{-\chi s^{2\varrho}},
\end{align*}
completing the proof. 
\section{Remarks}
As a simple consequence of our results, we can prove the convergence of the Wiener--Wintner type averages. This result is probably known, but we have not found anything like this in the literature in the
presented generality.

Let $(X,\mu)$ be a measure space endowed with a measure preserving transformation $T\colon X\to X$ and let  $$R(x)=a_m x^m+a_{m-1}x^{m-1}+\cdots +a_1x+a_0,\quad x\in\RR,$$
be a polynomial with \textit{real coefficients}. Moreover, let $P\colon\ZZ\to\ZZ$ be a polynomial with \textit{integer coefficients} such that $P(0)=0$. For $p\in(1,\infty)$, the \textit{Wiener--Wintner type averages}
\begin{equation}\label{wienerwinter}
    \frac{1}{2N+1}\sum_{n=-N}^N f(T^{P(n)}x)\ex\big(R(n)\big)
\end{equation}
converge $\mu$-almost everywhere for any $f\in L^p(X,\mu)$. According to Assani \cite[p. 179]{ASS}, the convergence of the averages \eqref{wienerwinter} in the case when $\deg P\geq2$ is known only for $R\equiv0$. However, in \cite[Theorem 1.9]{EK}, the authors have established the convergence in the case when $P(n)=n^2$ and $R(n)=\theta n$ for any $\theta\in\RR.$

Let us show how to deduce the convergence of the averages \eqref{wienerwinter} from Corollary~\ref{thm:cor1}. Clearly, we may assume that $R(0)=0$. We consider the measure space $(Y,\nu)$ where $Y:=X\times\TT$, $\nu:=\mu\times\lambda$, and $\lambda$ is the normalized Lebesgue measure on $\TT$. We equip the space $(Y,\nu)$ with the family of measure preserving commuting transformations $S_1, S_2,\ldots,S_m,S_{m+1}\colon X\times\TT\to X\times\TT$ where, for $j=1,\ldots,m$, we put $S_j:={\rm Id}\times D_j$  with
\begin{equation*}
    D_j(\xi):=\ex(a_j)\xi
\end{equation*}
being a rotation on $\TT$, and $S_{m+1}:=T\times{\rm Id}.$
We consider the following polynomial mapping
 \begin{equation*}
     \calP(n):=(n^1,n^2,\ldots,n^m,P(n))\colon\ZZ\to\ZZ^{m+1}.
 \end{equation*}
By Corollary~\ref{thm:cor1}, we know that the averages 
\begin{equation}\label{avehwin}
    \frac{1}{2N+1}\sum_{n=-N}^N h\big(S_1^{n^1}\cdots S_m^{n^m}S_{m+1}^{P(n)}y\big),\quad y\in X\times\TT
\end{equation}
converge $\nu$-almost everywhere for any $h\in L^p(Y,\nu)$. If, for $f\in L^p(X,\mu)$, we consider the function $h(y):=f(x)\xi$, then we see that the convergence of the averages \eqref{wienerwinter} follows from the convergence of the averages \eqref{avehwin}. 

The procedure described above can be extended to obtain that, for $\calP$ being a polynomial mapping of the form \eqref{polymap} and $\calR\colon\RR^k\to\RR$ being a polynomial with real coefficients, the averages 
\begin{equation*}
\calA_{t}^{\calP,\calR,k',k''}f(x):=\frac{1}{\vartheta_\Omega(t)}\sum_{(n,p)\in\ZZ^{k'}\times(\pm\PP)^{k''}}f(S_1^{\calP_1(n,p)}\cdots S_d^{\calP_d(n,p)}x)\mathds{1}_{\Omega_t}(n,p)\ex\big(\calR(n,p)\big)\Big(\prod_{i=1}^{k''}\log|p_i|\Big)
\end{equation*}
and
\begin{equation*}
\calH_t^{\calP,\calR,k',k''}f(x):=\sum_{(n,p)\in\ZZ^{k'}\times(\pm\PP)^{k''}}f(S_1^{\calP_1(n,p)}\cdots S_d^{\calP_d(n,p)}x)K(n,p)\mathds{1}_{\Omega_t}(n,p)\ex\big(\calR(n,p)\big)\Big(\prod_{i=1}^{k''}\log|p_i|\Big)
\end{equation*}
converge $\mu$-almost everywhere for any $f\in L^p(X,\mu)$ with $p\in(1,\infty)$. Moreover, we can deduce that the analogue of Corollary~\ref{thm:cor1} holds for $\calA_{t}^{\calP,\calR,k',k''}$ and $\calH_t^{\calP,\calR,k',k''}$. 

Unfortunately, we are not able to prove the Wiener--Wintner theorem for the averages $\calA_{t}^{\calP,\calR,k',k''}$ and $\calH_t^{\calP,\calR,k',k''}$. In our case, that would mean showing that, for any $M\in\NN$, there is a subset of $X$ of full measure on which the convergence holds regardless of the choice of polynomial $\calR$ with $\deg\calR\leq M$.

It is an interesting question whenever the Wiener--Wintner theorem can be somehow deduced from the inequality
\begin{equation}\label{oscilquestww}
    \sup_{N\in\NN}\sup_{I\in\mathfrak{S}_N(\RR_+)}\norm[\big]{O_{I,N}^2(\calA_t^{\calP,\calR,k',k''} f:t>0)}_{L^p(X,\mu)}\le C_{p,d,k,\deg\calP,\deg\calR}\norm{f}_{L^p(X,\mu)}.
\end{equation}
 This question is motivated by the fact that the constant in \eqref{oscilquestww} depends only on the degree of $\calR$ and not its coefficients. We hope to investigate this problem in the near future.


\begin{thebibliography}{999}
\bibitem{ASS}\textsc{I. Assani.}
\newblock{Wiener Wintner ergodic theorems.}
\newblock{World Scientific, River Edge, NJ, 2003.}
\bibitem{Bel}\textsc{A. Bellow.}
\newblock{Measure Theory Oberwolfach 1981. Proceedings of the Conference held at Oberwolfach, June 21-27, 1981.}
\newblock{Lecture Notes in Mathematics {\bf 945}, editors D. K{\"o}lzow and D. Maharam-Stone. Springer-Verlag Berlin Heidelberg (1982). Section: Two problems submitted by A. Bellow, pp. 429--431.}

\bibitem{Birk}\textsc{G. Birkhoff.}
\newblock{Proof of the ergodic theorem.}
\newblock{Proc. Natl. Acad. Sci. USA {\bf 17} (1931), no. 12, pp. 656--660.}

\bibitem{B1} \textsc{J. Bourgain.}
\newblock {On the maximal ergodic theorem for certain subsets of the integers.}
\newblock {{Israel J. Math.} {\bf 61} (1988), pp. 39--72.}

\bibitem{B2} \textsc{J. Bourgain.}
\newblock {On the pointwise ergodic theorem on $L^p$ for arithmetic sets.}
\newblock {{Israel J. Math.} {\bf 61} (1988), pp. 73--84.}

\bibitem{B3} \textsc{J. Bourgain.}
\newblock{Pointwise ergodic theorems for arithmetic sets. With an appendix by the author, H. Furstenberg, Y. Katznelson, and D.S. Ornstein.}
\newblock {{Inst. Hautes Etudes Sci. Publ. Math.} {\bf 69} (1989), pp. 5--45.}

\bibitem{Cald}
\textsc{A. Calder\'{o}n.}
\newblock{ Ergodic theory and translation invariant operators.}
\newblock{ Proc. Natl. Acad. Sci. USA {\bf 59} (1968),  349--353.}

\bibitem{CJRW1} \textsc{J.T. Campbell, R.L. Jones, K. Reinhold, M. Wierdl.}
\newblock{Oscillation and variation for the Hilbert transform.}
\newblock{Duke Math. J. {\bf 105} (2000), no. 1, pp. 59--83.}

\bibitem{EK}\textsc{T. Eisner, B. Krause.}
\newblock{(Uniform) convergence of twisted ergodic averages.}
\newblock{Ergodic Theory Dynam.
Systems {\bf 36} (2016), no.7, pp 2172--2202. }

\bibitem{COT}\textsc{M. Cotlar.}
\newblock{A unified theory of Hilbert transforms and ergodic theorems.}
\newblock{Rev. Mat. Cuyana
{\bf 1} (1955), 105--167.}

\bibitem{Fus}\textsc{H. Furstenberg.}
\newblock{Problems Session, Conference on Ergodic Theory and Applications University of New Hampshire, Durham, NH, June 1982.}

\bibitem{IW} \textsc{A.D. Ionescu, S. Wainger.}
\newblock {$L^p$ boundedness of discrete singular Radon transforms.}
\newblock {{J. Amer. Math. Soc. {\bf 19} (2005), no. 2, pp. 357--383.}}

\bibitem{JG} \textsc{R.L. Jones, G. Wang.}
\newblock {Variation inequalities for the Fej{\'e}r and Poisson kernels.}
\newblock {{Trans. Amer. Math. Soc.} \textbf{356} (2004),  no. 11, pp. 4493--4518.}

\bibitem{jkrw} \textsc{R.L. Jones, R. Kaufman, J.M. Rosenblatt, M. Wierdl.}
\newblock {Oscillation in ergodic theory.}
\newblock {{Ergodic Theory
Dynam. Systems} \textbf{18} (1998), no. 4, pp. 889--935.}

\bibitem{jrw}\textsc{R.L. Jones, R. Rosenblatt, M. Wierdl.}
\newblock{Oscillation in ergodic theory: higher dimensional results.}
\newblock{Israel J. Math. {\bf 135} (2003), pp. 1--27.}


\bibitem{jsw} \textsc{R.L. Jones, A. Seeger, J. Wright.}
\newblock {Strong variational and jump inequalities in harmonic analysis.}
\newblock {{Trans. Amer. Math. Soc.} \textbf{360} (2008),  no. 12, pp. 6711--6742.}

\bibitem{Lep}\textsc{D. L{\'e}pingle..}
\newblock { La variation d'ordre $p$ des semi-martingales.}
\newblock { {Z. Wahrscheinlichkeitstheorie und Verw. Gebiete.} {\bf 36} (1976), no. 4, pp. 295--316.}


\bibitem{MSW} \textsc{A. Magyar, E.M. Stein, S. Wainger.}
\newblock {Discrete analogues in harmonic analysis: spherical averages.}
\newblock {{Ann. Math. {\bf 155} (2002), pp. 189--208.}}

\bibitem{M} \textsc{M. Mirek.}
\newblock{$\ell^p\big(\ZZ^d\big)$-estimates for discrete Radon
transform: square function estimates.}  \newblock{ Anal. PDE {\bf 11}
(2018), no. 3, pp. 583--608.}

\bibitem{MSS}\textsc{M. Mirek, W. Słomian, T.Z. Szarek.}
\newblock{Some remarks on oscillation inequalities.}
\newblock{{Ergodic Theory
Dynam. Systems}, 1--30. doi:10.1017/etds.2022.77.}

\bibitem{MISZWR}\textsc{M. Mirek, T. Z. Szarek, J. Wright.}
\newblock{Oscillation inequalities in ergodic theory and analysis: one-parameter and multi-parameter perspectives.}
\newblock{(preprint) arXiv:2209.01309.}

\bibitem{MST1} \textsc{M. Mirek, E.M. Stein, B. Trojan.}
\newblock{$\ell^p\big(\ZZ^d\big)$-estimates for discrete operators of
Radon types I: Maximal functions and vector-valued estimates.}
\newblock{J. Funct. Anal. {\bf 277} (2019), pp. 2471--2892.}

\bibitem{MST2} \textsc{M. Mirek, E.M. Stein, B. Trojan.}
{$\ell^p(\ZZ^d)$-estimates for discrete operators of Radon
  type: Variational estimates}. Invent. Math.  {\bf 209} (2017), no. 3, pp.  665--748.


\bibitem{MSZ1} \textsc{M. Mirek, E.M. Stein, P. Zorin-Kranich.}
\newblock{Jump inequalities via real interpolation. }
\newblock{Math. Ann. {\bf 376} (2020), pp. 797--819.}

\bibitem{MSZ2} \textsc{M. Mirek, E.M. Stein, P. Zorin-Kranich.}
\newblock{A bootstrapping approach to jump inequalities and their applications. }
\newblock{Anal. PDE {\bf 13} (2020), No. 2, pp. 527--558.}

\bibitem{MSZ3} \textsc{M. Mirek, E.M. Stein, P. Zorin-Kranich.}
\newblock{Jump inequalities for translation-invariant operators of Radon type on $\ZZ^d$. }
\newblock{Adv. Math. {\bf 365} (2020) 107065.}

\bibitem{MT}\textsc{M. Mirek, B. Trojan.}
\newblock{Cotlar's ergodic theorem along the prime numbers.}
\newblock{J. Fourier Anal. Appl. {\bf 21} (2015), pp. 822--848.}

\bibitem{MT1} \textsc{M. Mirek, B. Trojan}. {Discrete maximal functions
  in higher dimensions and applications to ergodic theory.} Amer. J.
Math. {\bf 138} (2016),  no. 6, pp. 1495--1532.

\bibitem{MV}\textsc{H.L. Montgomery, R.C. Vaughan.}
\newblock{Multiplicative number theory I: Classical theory.}
\newblock{Cambridge Studies in Advanced
Mathematics, Cambridge University Press, 2006.}

\bibitem{Neu}\textsc{J. von Neumann.}
\newblock{Proof of the quasi-ergodic hypothesis.}
\newblock{Proc. Natl. Acad. Sci. USA {\bf 18} (1932), pp. 70--82.}

\bibitem{Riesz}\textsc{F. Riesz.}
\newblock{Some Mean Ergodic Theorems.}
\newblock{J. London Math. Soc. {\bf 13} (1938), pp. 274--278.}

\bibitem{RW} \textsc{J. Rosenblatt, M. Wierdl.}
\newblock {Pointwise ergodic theorems via harmonic analysis. In Proc. Conference on Ergodic Theory (Alexandria, Egypt, 1993).}
\newblock {London Mathematical Society Lecture Notes,  {\bf 205} (1995), pp. 3--151.}

\bibitem{Sieg}\textsc{C.L. Siegel.}
\newblock{{\"U}ber Die Classenzahl Quadratischer K{\"o}rper.}
\newblock{Acta Arith. {\bf 1} (1935), pp. 83--96.}


\bibitem{S1}\textsc{W. Słomian.}
\newblock{Bootstrap methods in bounding discrete Radon operators.}
\newblock{J. Funct. Anal. {\bf 283} (2022), no. 9, 109650.}

\bibitem{S}\textsc{W. Słomian.}
\newblock{Oscillation Estimates for Truncated Singular Radon Operators.}
\newblock{J. Fourier Anal. Appl. {\bf 29} (2023). https://doi.org/10.1007/s00041-022-09986-8.}
\bibitem{bigs} \textsc{E. M. Stein.}
\newblock {{H}armonic {A}nalysis: {R}eal-{V}ariable {M}ethods, {O}rthogonality,
		and {O}scillatory {I}ntegrals.}
\newblock {{Princeton University Press, (1993).}}

\bibitem{TaoIW} \textsc{T. Tao.}
\newblock {The Ionescu--Wainger multiplier theorem and the adeles.}
\newblock {Mathematika {\bf 67} (2021), pp. 647--677.}


\bibitem{Troj}\textsc{B. Trojan.}
\newblock{Variational estimates for discrete operators modeled on multi-dimensional polynomial subsets of primes.}
\newblock{Math. Ann. {\bf 374} (2019), pp. 1597--1656.}

\bibitem{Wal} \textsc{A. Walfisz.}
\newblock{Zur additiven Zahlentheorie.}
\newblock{Acta Arith. {\bf 1} (1935), pp. 123--160.}

\bibitem{Wie}\textsc{M. Wierdl.}
\newblock{Pointwise ergodic theorem along the prime numbers.} 
\newblock{Israel J. Math. {\bf 64} (1988), pp. 315--336.}

\bibitem{zk}
\textsc{P. Zorin-Kranich.}
\newblock {Variation estimates for averages along primes and polynomials.}
\newblock { {J. Funct. Anal.} {\bf 268} (2015), no. 1, pp. 210--238.}
\end{thebibliography}
\end{document}